\numberwithin{equation}{section}
\newtheorem{theorem}{Theorem}[section]
\newtheorem{corollary}[theorem]{Corollary}
\newtheorem{lemma}[theorem]{Lemma}
\newtheorem{example}[theorem]{Example}
\newtheorem{proposition}[theorem]{Proposition}
\newtheorem{remark}[theorem]{Remark}
\DeclareMathOperator{\idx}{\mathrm{i}}
\DeclareMathOperator{\Ker}{\mathrm{ker}}
\DeclareMathOperator{\Fr}{\mathrm{Fr}\,}
\newcommand{\cl}[1]{\overline{#1}}
\newcommand{\R}{\mathbb{R}}
\newcommand{\N}{\mathbb{N}}
\newcommand{\F}{\mathcal{F}}
 \newcommand{\sign}{\mathop\mathrm{sign}\nolimits}
\newcommand{\e}{\varepsilon}
\newcommand{\s}{\mathfrak{s}}
\title[A note on topological methods for a class of DAEs]{A note on topological methods 
for a class of Differential-Algebraic Equations}
\author{Marco Spadini}
\address[Marco Spadini]{Dipartimento di Matematica Applicata ``G.\ Sansone'', Via S.\ Marta 3, 
50139 Firenze, Italy}
\email{marco.spadini@{}math.unifi.it}
\begin{document}
\subjclass[2000]{34A09 (Primary), 34C25, 34C40 (Secondary)}
\keywords{Differential Algebraic Equations, Ordinary differential equations on manifolds,
degree of a vector field}
\begin{abstract}
 We study a particular class of autonomous Differential-Algebraic Equations that are equivalent
to Ordinary Differential Equations on manifolds. Under appropriate assumptions we determine
a straightforward formula for the computation of the degree of the associated
tangent vector field that does not require any explicit knowledge of the manifold. We use 
this formula to study the set of harmonic solutions to periodic perturbations of our
equations. Two different classes of applications are provided.
\end{abstract}

\maketitle

\section{Introduction}
In this paper we  apply topological methods to the study of the set of periodic 
solutions of periodic perturbations of a particular class of differential-algebraic equations
(DAEs). Namely, we consider the following DAE in semi-explicit form:
\begin{equation}\label{NOpert}
 \left\{
\begin{array}{l}
 \dot x=f(x,y),\\
 g(x,y)=0,
\end{array}
\right.
\end{equation}
where $g:U\to\R^s$ and $f:U\to\R^k$ are continuous maps defined on an open connected set
$U\subseteq\R^k\times\R^s$, with $g\in C^\infty$ and $\partial_2g(p,q)$, the partial derivative 
of $g$ with respect to the second variable, invertible for each $(p,q)\in U$. Given $T>0$, 
we will consider $T$-periodic perturbations of $f$ in \eqref{NOpert} and study the set of 
$T$-periodic solution of the resulting $T$-periodic DAE.  Namely, for $\lambda\geq 0$,
we look at the $T$-periodic solutions of 
\begin{equation}\label{Tpert}
 \left\{
\begin{array}{l}
 \dot x=f(x,y)+\lambda h(t,x,y),\\
 g(x,y)=0,
\end{array}
\right.
\end{equation}
where $h:\R\times U\to\R^k$ is continuous and $T$-periodic in the first variable. 
Roughly speaking, we will give conditions ensuring the existence of a connected component 
of elements $(\lambda;x,y)$, $\lambda\geq 0$ and $(x,y)$ a $T$-periodic solution to 
\eqref{Tpert}, that emanates from the set of constant solutions of \eqref{NOpert} and is 
not compact. This kind of results is useful to study existence and multiplicity of 
$T$-periodic solutions of \eqref{Tpert}.

Since $\partial_2g(p,q)$ is invertible for all $(p,q)\in U$, equations 
\eqref{NOpert} and \eqref{Tpert} are index 1 differential algebraic equation and have 
strangeness index $0$ (see e.g.\ \cite{KM}). However, our argument will not require any 
knowledge of the theory of DAEs.  

The assumption on $\partial_2g(p,q)$ implies that $0\in\R^s$ is a regular value of $g$, 
thus $M:=g^{-1}(0)$ is a $C^\infty$ submanifold of $\R^k\times\R^s$. Notice that $M$,
locally, can be represented as graph of some map from an open subset of $\R^k$ to $\R^s$. 
Thus equations
\eqref{NOpert} and \eqref{Tpert} can be locally decoupled. However, globally, this might 
not be true. Observe also that even when $M$ is a graph of some map $\varphi$, it might
happen that the expression of $\varphi$ is complicated (or even impossible to determine
analytically), so that the decoupled version of \eqref{NOpert} or \eqref{Tpert} may be
impractical. We have a very simple example of this fact if we take $k=s=1$, $U=\R\times\R$,
$g(p,q)=q^7+q-p^2$ and $f(p,q)=q$. 

It is well known (compare \cite[\S 4.5]{KM}) and easy to see that, when $\partial_2g(p,q)$ 
is invertible for all $(p,q)\in U$, equation \eqref{NOpert} induces a tangent vector field 
$\Psi$ on $M$, that is, it gives rise to an autonomous ordinary differential equation on 
$M$. Equation \eqref{Tpert}, then, leads to a $T$-periodic perturbation of this ODE.
In our main result (Theorem \ref{rami} below), in order to get information about the set of 
$T$-periodic solutions of \eqref{Tpert}, we apply an argument of  \cite{FS98} about periodic 
perturbation of an autonomous ordinary differential equation on a differentiable manifold. 
The results of \cite{FS98}, however, require some knowledge of the degree of the perturbed 
tangent vector field. In the present setting this means the degree of the tangent vector field 
$\Psi$ on $M$.  Since $M$ is known only implicitly, and the form of $\Psi$ may not be very 
simple, a direct application of \cite{FS98} is of limited interest. Thus, our first step will 
be to determine a formula (Theorem \ref{formuladeg} below) that allows the computation of the 
absolute value of the degree of $\Psi$ by means of the degree of the ``morally'' simpler 
vector field $F:U\to\R^k\times\R^s$, given by
\begin{equation}\label{defF}
(p,q)\mapsto \big(f(p,q),g(p,q)\big).
\end{equation}
We stress the fact (as we shall briefly discuss below) that, since in Euclidean spaces 
vector fields can be regarded as maps and vice versa,  the degree of the vector field $F$ is 
essentially the well known Brouwer degree, with respect to $0$, of $F$ seen as a map. Hence 
the degree of $F$ has a simpler nature than that of $\Psi$ and, as a consequence, it is usually 
easier to compute. 

\smallskip
\noindent\textbf{Notation.} Throughout this paper, $|\cdot|$ will denote the absolute value 
in $\R$ while $|\cdot|_n$ will be the norm in $\R^n$ given by
\[
|a|_n=\sum_{i=1}^n|a_i|\quad\text{for all}\; a=(a_1,\ldots,a_n)\in\R^n,
\]
Thus, coherently with this notation we have $\big|(p,q)\big|_{k+s}=|p|_k+|q|_s$, for 
$(p,q)\in\R^k\times\R^s$.

\section{Associated vector fields}\label{assocVF}
In this section, we associate ordinary differential equations on the manifold $M=g^{-1}(0)$ 
to \eqref{NOpert} and to \eqref{Tpert}, in quite a natural way (compare \cite[\S 4.5]{KM}). 

Let $I\subseteq\R$ be an interval and $W\subseteq\R^n$ be open. Given $r\in\N\cup\{0\}$,
the set of all $W$-valued $C^r$ functions defined on $I$ is denoted by by $C^r(I,W)$. For
simplicity, we use $C(I,W)$ as a synonym of $C^0(I,W)$.

Let $U\subseteq\R^k\times\R^s$ be open and connected, and let $g:U\to\R^s$, $f:U\to\R^k$ and 
$h:\R\times U\to\R^k$ be continuous maps with $g\in C^\infty$ and $\partial_2g(p,q)$ invertible 
for all $(p,q)\in U$. We also assume, throughout this paper, that $h$ is $T$-periodic in the 
first variable for some given $T>0$. 

A solution of \eqref{Tpert} for a given $\lambda\geq 0$ consists of a pair of functions 
$x\in C^1(I,\R^k)$ and $y\in C(I,\R^s)$, $I$  an interval, with the property that 
\[
\left\{
\begin{array}{l}
 \dot x(t)=f\big(x(t),y(t)\big)+\lambda h\big(t,x(t),y(t)\big),\\
 g\big(x(t),y(t)\big)=0,
\end{array}
\right.
\]
for each $t\in I$.
Notice that the assumptions on $g$ and the Implicit Function Theorem imply that $y$ is 
actually a $C^1$ function. In fact, in what follows, it will be convenient to consider 
a solution of \eqref{Tpert} as a function $\zeta:=(x,y)\in C^1(I,\R^k\times\R^s)$. 

Let $(x,y)\in C^1(I,\R^k\times\R^s)$ be a solution of \eqref{Tpert} for a given $\lambda\geq 0$, 
defined on some interval $I\subseteq\R$. Then, differentiating the identity $g\big(x(t),y(t)\big)=0$, 
we get
\[
 \partial_1g\big(x(t),y(t)\big)\dot x(t)+\partial_2g\big(x(t),y(t)\big)\dot y(t)=0,
\]
which yields 
\begin{equation}\label{soly}
\begin{split}
\dot y(t)=-\big[\partial_2g\big(x(t),y(t))\big]^{-1}&\partial_1g\big(x(t),y(t)\big)\\
&\quad\Big[f\big(x(t),y(t)\big)+\lambda h\big(t,x(t),y(t)\big)\Big]
\end{split}
\end{equation}
for all $t\in I$.

As already observed, because of the assumptions on $\partial_2g(p,q)$, $0\in\R^s$ is a 
regular value of $g$. Thus, $M=g^{-1}(0)$ is a $C^\infty$ submanifold of $\R^k\times\R^s$ 
and, given $(p,q)\in M$, the tangent space $T_{(p,q)}M$ to $M$ at $(p,q)$ is given by the 
kernel $\Ker d_{(p,q)}g$ of the differential  $d_{(p,q)}g$ of $g$ at  $(p,q)$.

Consider $\Psi:M\to\R^k\times\R^s$ and $\Upsilon:\R\times M\to\R^k\times\R^s$ given by
\begin{subequations}\label{campiv}
\begin{align}
\Psi(p,q) &= 
      \left(f(p,q)\,,\,-[\partial_2g(p,q)]^{-1}\partial_1g(p,q)f(p,q)\right),\label{campiv1}\\
\intertext{and}
 \Upsilon(t,p,q)
  &=\left(h(t,p,q),
                -[\partial_2g(p,q)]^{-1}\partial_1g(p,q)h(t,p,q)\right).\label{campiv2}
\end{align}
\end{subequations}
Clearly, $\Upsilon$ is $T$-periodic in the first variable. Let us show that $\Psi$ and 
$\Upsilon$ are tangent to $M$ in the sense that, for any $(t,p,q)\in\R\times M$,
\begin{equation}\label{tang}
 \Psi(p,q)\in T_{(p,q)}M\quad\text{and}\quad\Upsilon(t,p,q)\in T_{(p,q)}M.
\end{equation}
Consider for instance $\Psi$. We have
\[
d_{(p,q)}g [\Psi(p,q)]=
   \begin{pmatrix}
   \partial_1 g(p,q) & \partial_2g(p,q)
   \end{pmatrix}
   \begin{pmatrix}
   f(p,q) \\-[\partial_2g(p,q)]^{-1}\partial_1g(p,q)f(p,q)
   \end{pmatrix}
=0.
\]
Since $T_{(p,q)}M=\Ker d_{(p,q)}g$, the first relation in \eqref{tang} is proved. The second 
one follows from a similar argument and is left to the reader. 

 Taking \eqref{soly} into account, one can see that \eqref{Tpert} is equivalent to the 
following ODE on $M$:
\begin{equation}\label{TeqonM}
\dot\zeta=\Psi(\zeta)+\lambda\Upsilon(t,\zeta),\qquad \lambda\geq 0,
\end{equation}
where, we recall, $\zeta=(x,y)$. By the same argument one can see that \eqref{NOpert} is 
equivalent to 
\begin{equation}\label{NOponM}
\dot\zeta=\Psi(\zeta).
\end{equation}
\begin{remark}
Let $g$ and $f$ and $h$ be as above. When $f$ is $C^1$, so is the vector field $\Psi$. 
Thus, by virtue of the equivalence of \eqref{NOpert} with \eqref{NOponM}, the local results 
on existence, uniqueness and continuous dependence of local solutions of the initial value 
problems translate to \eqref{NOpert} from the theory of ordinary differential equations on 
manifolds. Of course, if also $h$ is $C^1$, a similar statement holds for \eqref{Tpert}.
\end{remark}

Notice that the importance of the hypotheses on $g$ goes beyond ensuring the smoothness
of $M$. In fact, even when $M$ is a differentiable manifold and $g$ is $C^\infty$, if we 
drop our assumption on $\partial_2g$, \eqref{NOpert} may fail to induce a (continuous)
tangent vector field $\Psi$ on $M$ and, even if this happens, \eqref{NOpert}  might not be
equivalent to \eqref{NOponM}. The following simple examples illustrates these possibilities.
\begin{example}\label{EX.1}
Take $k=s=1$ and let $U=\R\times\R$. Consider the following DAE: 
\begin{equation}\label{eqex.1}
 \dot x=1,\qquad x-y^3=0.
\end{equation}
Clearly, $M=\{(p,q)\in\R\times\R:p=q^3\}$ is a $C^\infty$ submanifold of $\R\times\R$. 
Equation \eqref{eqex.1} induces the vector field 
\[
(p,q)\mapsto\left(1,\frac{1}{3q^2}\right)
\]
on all points of $M$, with the exception of $(0,0)$. Clearly, this vector field cannot
be extended to a continuous tangent vector field on $M$.
\end{example}
\begin{example}\label{EX.2}
Let $k=s=1$ and let $U=\R\times\R$, as in the previous example. Consider the following DAE:
\begin{equation}\label{eqex.2}
 \dot x=y,\qquad x^2+y^2=1.
\end{equation}
In this case, the manifold $M$ is the unit circle $S^1$ of $\R\times\R$ centered at the 
origin. Clearly, \eqref{eqex.2} induces on $S^1\setminus\{(\pm 1,0)\}$ the vector field 
$(p,q)\mapsto (q,-p)$ that can be extended uniquely to a vector field $\Psi$ defined on the 
whole $S^1$. Notice, however, that \eqref{eqex.2} is not equivalent to \eqref{NOponM} on 
$S^1$. In fact, the maps $t\mapsto(\pm 1,0)$ are solutions of \eqref{eqex.2}, but not of 
\eqref{NOponM}.
\end{example}

Observe that taking $U=(\R\times\R)\setminus\{(0,0)\}$ in Example \ref{EX.1}, the manifold 
$M=\{(p,q)\in U:p=q^3\}$ consists of two connected sets which the vector field 
$\Psi(p,q)=\big(1,1/(3q^2)\big)$ is tangent to. Now, \eqref{eqex.1} turns out 
to be equivalent to \eqref{NOponM} on $M$. 

Similarly, taking $U=(\R\times\R)\setminus\{(\pm 1,0)\}$ in Example \ref{EX.2}, one has that 
$M$ consists of two connected components and the above construction of $\Psi$ can be carried 
out on $M$.

\medskip
In order to investigate the $T$-periodic solutions of \eqref{Tpert} we will study the 
set of $T$-periodic solutions of the equivalent equation \eqref{TeqonM}. Our first 
step will be to consider the case $\lambda=0$ and determine a formula for the computation
of the degree (sometimes called characteristic or rotation) of the tangent vector field
$\Psi$ on $U$. Before doing that, however, we will recall some basic facts about the
notion of the degree of a tangent vector field.

\section{The degree of a tangent vector field}

\smallskip
We now recall some basic notions about tangent vector fields on manifolds.

Let $M\subseteq\R^n$ be a manifold. Given any $p\in M$, $T_pM\subseteq\R^n$ denotes the 
tangent space of M at $p$. Let $w$ be a tangent vector field on $M$, that is, a continuous 
map $w:M\to\R^n$ with the property that $w(p)\in T_pM$ for any $p\in M$. If $w$ is (Fr\'echet) differentiable at $p\in M$ and $w(p)=0$, then the differential $d_pw : T_pM\to\R^k$ maps 
$T_pM$ into itself (see e.g.\ \cite{Mi}), so that the determinant $\det d_pw$ of $d_pw$ is 
defined. If, in addition, $p$ is a nondegenerate zero (i.e.\ $d_pw : T_pM\to\R^n$ is injective) 
then $p$ is an isolated zero and $\det d_pw\neq 0$.

Let $W$ be an open subset of $M$ in which we assume $w$ admissible for the degree; that is, 
the set $w^{-1}(0) \cap W$ is compact. Then, one can associate to the pair $(w,W)$ an integer, 
$\deg(w,W)$, called the \emph{degree (or characteristic) of the vector field $w$ in $W$}, 
which, roughly speaking, counts (algebraically) the zeros of $w$ in $W$ (see e.g.\ 
\cite{FPS05, H, Mi} and references therein). For instance, when the zeros of $w$ are all 
nondegenerate, then the set $w^{-1}(0)\cap W$ is finite and
\begin{equation}\label{sommasegni}
\deg(w,W)=\sum_{q\in w^{-1}(0)\cap W}{\rm sign} \det d_qw.
\end{equation}
When $M = \R^n$, $\deg(w,W)$ is just the classical Brouwer degree, $\deg_B(w,V,0)$, where 
$V$ is any bounded open neighborhood of $w^{-1}(0) \cap W$ whose closure is contained in $W$.

For the purpose of future reference, we mention a few of the properties of the degree 
of a tangent vector field that shall be useful in the sequel. Here $W$ is an open subset 
of a manifold $M\subseteq\R^n$ and $w:M\to\R^n$ is a tangent vector field.
\begin{description}
\item[Solution]
{\em If $(w,W)$ is admissible and $\deg(w,W)\neq 0$, then $w$ has a zero in $W$.}
\item[Additivity]
{\em Let $(w,W)$ be admissible. If $W_1$ and $W_2$ are two
disjoint open subsets of $W$ whose union contains $w^{-1}(0)\cap W$, then}
\[
\deg(w,W) = \deg(w,W_1)+\deg(w,W_2).
\]
\item[Homotopy Invariance]
{\em Let $h:M\times[0,1]\to\R^n$ be a homotopy of tangent vector fields admissible in 
$W$; that is, $h(p,\lambda)\in T_pM$ for all $(p,\lambda)\in M\times[0,1]$ and 
$h^{-1}(0)\cap W\times[0,1]$ is compact. 
Then $\deg\big(h(\cdot,\lambda),W\big)$ is independent of $\lambda$.}
\item[Invariance under diffeomorphisms] {\em Let $M\subseteq\R^m$ and $N\subseteq\R^n$ 
be differentiable manifolds and let $v:N\to\R^n$ and $w:M\to\R^m$ be tangent vector fields.
Let also $V\subseteq N$ and $W\subseteq M$ be open, and let $\varphi:W\to V$ be a 
diffeomorphism. If 
\[
v(q)=d_{\varphi^{-1}(q)}\varphi\big[w\big(\varphi^{-1}(q)\big)\big]\quad
\forall q\in V,
\]
we say that $v|_V$ and $w|_W$ correspond under the diffeomorphism $\varphi$. In this case, 
if either $v$ is admissible in $V$ or or $w$ is admissible in $W$, then so is the other and 
\[
\deg(v,V)=\deg(w,W).
\]}
\end{description}
\begin{remark}\label{sper}
Let $M\subseteq\R^n$ be a differentiable manifold and let $W\subseteq M$ be open and 
relatively compact. If $w:M\to\R^n$ is such that $w(p)\neq 0$ on the boundary $\Fr(W)$ 
of $W$, then $(w,W)$ is admissible. Let $\e=\min_{p\in\Fr(W)}|w(p)|_n$. Then, for any 
$v:M\to\R^n$ such that $\max_{p\in\Fr(W)}|w(p)-v(p)|_n<\e$, we have that $(v,W)$ is 
admissible and that the homotopy $h:M\times[0,1]\to\R^n$ given by 
\[
h(p,\lambda)=\lambda w(p)+(1-\lambda) v(p)
\]
is admissible in $W$. Hence, by the Homotopy Invariance Property, 
\[
\deg(w,W)=\deg(v,W).
\]
\end{remark}

The Additivity Property implies the following important one:
\begin{description}
 \item[Excision]{\em Let $(w,W)$ be admissible. If $V\subseteq W$ is open and contains 
$w^{-1}(0)\cap V$, then $\deg(w,W) = \deg(w,V)$.} 
\end{description}

The Excision Property allows the introduction of the notion of index of an isolated zero 
of a tangent vector field. Let $w:M\to\R^n$ be a vector field tangent to the differentiable 
manifold $M\subseteq\R^n$, and let $q\in M$ be an isolated zero of $w$. Clearly, $\deg(w,V)$ 
is well defined for each open $V\subseteq M$ such that $V\cap w^{-1}(0)=\{ q\}$. By the 
Excision Property $\deg(w,V)$ is constant with respect to such $V$'s. This common value of 
$\deg(w,V)$ is, by definition, the \emph{index of $w$ at $ q$}, and is denoted by 
$\mathrm{i}\,(w, q)$. Using this notation, if $(w,W)$ is admissible, by the Additivity 
Property we get that if all the zeros in $W$ of $w$ are isolated, then
\begin{equation}\label{sommaindici}
\deg(w,W)=\sum_{q\in w^{-1}(0)\cap W} \mathrm{i}\,(w,q).
\end{equation}
By formula \eqref{sommasegni} we have that if $q$ is a nondegenerate zero of $w$, then
\[
 \mathrm{i}\,(w,q)=\sign\det d_ q w.
\]
Notice that \eqref{sommasegni} and \eqref{sommaindici} differ in the fact that, in the
latter, the zeros of $w$ are not necessarily nondegenerate as they have to be in the former. 
In fact, in \eqref{sommaindici}, $w$ need not be differentiable at its zeros.

\section{The degree of $\Psi$}

In this section we shall obtain a simple formula for the computation of the degree of $\Psi$ 
that does not require an ``explicit'' expression of the manifold $M=g^{-1}(0)$.  Namely,
let $U\subseteq\R^k\times\R^s$ be be open and connected. Define $F:U\to\R^k\times\R^s$ 
by \eqref{defF} and $\Psi:M\to\R^k\times\R^s$ by \eqref{campiv1}. We shall prove 
a formula that allows the computation of $\deg(\Psi,M)$ from the degree of $F$ in $U$ 
(notice that $U\subseteq\R^k\times\R^s$ being an open set is a differentiable manifold, so 
that $\deg(F,U)$ makes sense).

\begin{theorem}\label{formuladeg}
Let $U\subseteq\R^k\times\R^s$ be open and connected, and let $g:U\to\R^s$ and $f:U\to\R^k$ 
be such that $f$ is continuous and $g$ is $C^\infty$ with $\partial_2g(p,q)$ invertible 
for all $(p,q)\in U$. Let also $F:U\to\R^k\times\R^s$ and $\Psi:M\to\R^k\times\R^s$ be given 
by \eqref{defF} and \eqref{campiv1}, respectively. If either $\deg(\Psi,M)$ or $\deg(F,U)$ 
is well defined, so is the other, and
\begin{equation}\label{idgradi}
 \big|\deg(\Psi,M)\big|=\big|\deg(F,U)\big|.
\end{equation}
\end{theorem}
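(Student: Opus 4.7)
The key observation driving the argument is that the two vector fields share the same zero set. Indeed, since $\partial_2g(p,q)$ is invertible for every $(p,q)\in U$, one has $\Psi(p,q)=0$ if and only if $f(p,q)=0$ and $(p,q)\in M$, which is to say $F(p,q)=0$. Hence $\Psi^{-1}(0)=F^{-1}(0)$ as subsets of $U$, and since $M$ is closed in $U$ as the zero set of the continuous map $g$, compactness of this common set in $M$ and in $U$ coincide. This gives the equivalence of admissibility of $(\Psi,M)$ and $(F,U)$ for free, and it tells me the comparison of the two degrees should proceed by matching local indices at each zero, up to a global sign.

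The plan is to reduce to a generic situation, compute indices pointwise, and sum. First, using Remark \ref{sper}, I would replace $f$ by a smooth approximation $\tilde f$ so close to $f$ on a relatively compact neighbourhood of $F^{-1}(0)$ that neither $\deg(F,U)$ nor $\deg(\Psi,M)$ changes; the latter holds because $\Psi$ depends linearly on $f$ with coefficients that are bounded on the chosen neighbourhood, so a small $C^0$-perturbation of $f$ yields a small perturbation of $\Psi$. Since only the first $k$ components of $F$ are perturbed and $g$ stays fixed, a further small perturbation of $\tilde f$ (say, by a generic constant $c\in\R^k$ chosen via Sard's theorem) can be made so that $0$ is a regular value of $(\tilde f,g)$, and hence $F^{-1}(0)$ consists of finitely many nondegenerate zeros.

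The heart of the argument is the local index computation at such a zero $(p_0,q_0)$. The implicit function theorem yields a smooth local parametrization $q=\varphi(p)$ of $M$ on an open neighbourhood $V\subseteq\R^k$ of $p_0$, and differentiation of $g(p,\varphi(p))\equiv 0$ gives $d\varphi(p)=-[\partial_2g(p,\varphi(p))]^{-1}\partial_1g(p,\varphi(p))$. Under the diffeomorphism $\psi\colon p\mapsto(p,\varphi(p))$, one checks directly from \eqref{campiv1} that $\Psi$ corresponds, in the sense of the Invariance under diffeomorphisms property, to the vector field $\widetilde\Psi(p)=f(p,\varphi(p))$ on $V$. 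A direct computation gives
\[
d\widetilde\Psi(p_0)=\partial_1f(p_0,q_0)-\partial_2f(p_0,q_0)[\partial_2g(p_0,q_0)]^{-1}\partial_1g(p_0,q_0),
\]
which is precisely the Schur complement of $\partial_2g(p_0,q_0)$ in
\[
dF(p_0,q_0)=\begin{pmatrix}\partial_1f&\partial_2f\\ \partial_1g&\partial_2g\end{pmatrix}.
\]
The standard block-matrix determinant identity therefore yields $\det dF(p_0,q_0)=\det\partial_2g(p_0,q_0)\cdot\det d\widetilde\Psi(p_0)$, so that
\[
\idx(F,(p_0,q_0))=\sign\det\partial_2g(p_0,q_0)\cdot\idx(\Psi,(p_0,q_0)).
\]

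To conclude, I would observe that connectedness of $U$ together with the everywhere invertibility of $\partial_2g$ forces $\sign\det\partial_2g$ to be a constant $\sigma\in\{\pm1\}$ on $U$. Summing indices via \eqref{sommasegni} and \eqref{sommaindici} then yields $\deg(F,U)=\sigma\,\deg(\Psi,M)$, and taking absolute values gives \eqref{idgradi}. I expect the reduction step to be the main technical obstacle: one must verify that a single perturbation of $f$ can simultaneously be made small enough to preserve both $\deg(F,U)$ and $\deg(\Psi,M)$ (via Remark \ref{sper}) and generic enough to make $0$ a regular value of the perturbed $F$; this ultimately reduces to choosing the perturbation on a carefully fixed relatively compact neighbourhood of the compact common zero set $F^{-1}(0)$, on which $[\partial_2g]^{-1}\partial_1g$ is bounded.
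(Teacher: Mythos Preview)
Your proposal is correct and follows essentially the same route as the paper: coincidence of zero sets gives the admissibility equivalence, a $C^1$ approximation of $f$ plus a Sard-type constant shift makes $0$ a regular value of $F$ (this is exactly the content of the paper's Lemma~\ref{approxf}), and the local index comparison via the implicit-function parametrization and the Schur complement identity matches the paper's computation verbatim, yielding $\deg(F,U)=\sigma\,\deg(\Psi,M)$ with $\sigma=\sign\det\partial_2g$. The only point you flag as delicate---controlling both degrees with a single perturbation---is handled in the paper precisely as you suggest, by first excising to a relatively compact $V\supseteq F^{-1}(0)$ and then choosing $\varepsilon=\min_{\Fr V}|F|$; your observation that $\Psi$ depends linearly on $f$ with bounded coefficients on $\cl V$ is an equally valid way to see that the same $\varepsilon$ works for the $\Psi$-homotopy.
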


The proof of Theorem \ref{formuladeg} makes use of the following technical lemma. 

\begin{lemma}\label{approxf}
Let $U$, $f$ and $g$ be as in Theorem \ref{formuladeg}. Given $\e>0$, there exists a $C^1$ 
map $f_\e:U\to\R^k$ such that 
\[
 \sup_{(p,q)\in U}\big|f_\e(p,q)-f(p,q)\big|_k<\e
\]
and such that $(0,0)\in\R^k\times\R^s$ is a regular value of the map $F_\e:U\to\R^k\times\R^s$
given by $F_\e(p,q)=\big(f_\e(p,q),g(p,q)\big)$.
\end{lemma}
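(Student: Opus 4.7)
The strategy is a two-step perturbation of $f$: first, replace $f$ by a smooth uniform approximation $\tilde f$ on all of $U$; second, shift by a constant $a\in\R^k$ chosen via Sard's theorem on $\tilde f|_M$ so that $(0,0)$ becomes a regular value of $(\tilde f-a,g)$.

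For the first step, standard approximation techniques (for example, mollification combined with a partition of unity subordinate to a locally finite refinement of the cover of $U$ by sufficiently small balls) produce a $C^\infty$ map $\tilde f:U\to\R^k$ with $\sup_{(p,q)\in U}|\tilde f(p,q)-f(p,q)|_k<\e/2$. For the second step, since $\partial_2g$ is everywhere invertible, $M=g^{-1}(0)$ is a $C^\infty$ submanifold of $U$ of dimension $k$. The restriction $\tilde f|_M:M\to\R^k$ is a smooth map between manifolds of equal dimension, so Sard's theorem implies that the set of its regular values is dense (indeed of full measure) in $\R^k$. Pick a regular value $a\in\R^k$ of $\tilde f|_M$ with $|a|_k<\e/2$ and set
\[
f_\e(p,q):=\tilde f(p,q)-a.
\]
Then $f_\e\in C^1(U,\R^k)$ and $\sup_U|f_\e-f|_k\leq\sup_U|\tilde f-f|_k+|a|_k<\e$.

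It remains to verify that $(0,0)$ is a regular value of $F_\e=(f_\e,g)$. Suppose $(p,q)\in F_\e^{-1}(0,0)$: then $g(p,q)=0$, so $(p,q)\in M$, and $\tilde f(p,q)=a$. Because $dg(p,q)$ is already surjective onto $\R^s$ with kernel $T_{(p,q)}M$, a short linear-algebra check (or, equivalently, a Schur-complement computation on the block matrix of $dF_\e(p,q)$ using the invertibility of $\partial_2g(p,q)$) shows that $dF_\e(p,q)$ is surjective if and only if the restriction $df_\e(p,q)|_{T_{(p,q)}M}$ is surjective onto $\R^k$. Since $f_\e$ and $\tilde f$ differ by a constant, this restriction equals $d(\tilde f|_M)(p,q)$, which is surjective precisely because $a$ was chosen as a regular value of $\tilde f|_M$. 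Hence $(0,0)$ is a regular value of $F_\e$, as required.

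The conceptual obstacle — and the only non-routine point — is that one cannot perturb $g$, so Sard's theorem cannot be applied naively to $F=(f,g)$ to pick a small regular value $(a,b)$ and absorb it by a shift. The resolution is to transfer the regularity problem onto the submanifold $M$ itself, where Sard's theorem applies without restriction, and then to use the constant shift $a$ of $\tilde f$ to reposition the chosen regular value of $\tilde f|_M$ at the origin of $\R^{k+s}$.
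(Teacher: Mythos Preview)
Your proof is correct and follows the same two-step strategy as the paper: first replace $f$ by a smooth uniform approximation $\tilde f$, then shift by a small constant chosen via Sard's theorem so that $(0,0)$ becomes a regular value of $(\tilde f+\text{const},\,g)$. The only difference is in how the Sard step is packaged. The paper runs the standard parametric-transversality argument: it introduces the family $\mathcal{F}(p,q,b)=\big(\tilde f(p,q)+b,\,g(p,q)\big)$ on $U\times B$, observes that $(0,0)$ is a regular value of $\mathcal{F}$, and applies Sard to the projection $\pi|_X$ of $X=\mathcal{F}^{-1}(0,0)$ onto the parameter ball $B$ to find a good $\bar b$, then sets $f_\e=\tilde f+\bar b$. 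You instead apply Sard directly to $\tilde f|_M:M\to\R^k$ and verify regularity of $F_\e$ via the Schur-complement/block argument. The two routes are equivalent here: $X$ is precisely the graph of $-\tilde f$ over (an open piece of) $M$, so $\pi|_X$ is, up to a sign, $\tilde f|_M$, and a regular value $\bar b$ of $\pi|_X$ corresponds to $-\bar b$ being a regular value of $\tilde f|_M$. Your version is a bit more economical because it exploits that $M$ is already available as a $k$-dimensional manifold; the paper's version is the general transversality-theorem template (as in Guillemin--Pollack), which would transfer unchanged to situations where the constraint set is less explicitly described.
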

\begin{proof}
What follows is a fairly usual argument in transversality theory (see e.g.\ \cite{GP}). For the 
sake of completeness, though, we will provide a complete proof.

By standard approximation results in Euclidean spaces, there exists a $C^1$ map 
$\tilde f:U\to\R^k$ such that
\[
 \sup_{(p,q)\in U}\big|\tilde f(p,q)-f(p,q)\big|_k<\frac{\e}{2}.
\]

Denote by $B$ the $\frac{\e}{2}$-ball of $\R^k$ centered at the origin, and define 
$\F:\R^k\times\R^s\times B\to\R^k\times\R^s$ by $\F(p,q,b)=\big(\tilde f(p,q)+b,g(p,q)\big)$.
Since the origin of $\R^s$ is a regular value for $g$, the origin $(0,0)\in\R^k\times\R^s$ is 
a regular value for $\F$. Thus, $X=\F^{-1}(0,0)$ is a $k$-dimensional $C^1$ submanifold of
$\R^k\times\R^s\times\R^k$. 

Denote by $\pi$ the projection of $\R^k\times\R^s\times B$ onto its third factor. Clearly,
the restriction $\pi|_X$ of $\pi$ to $X$ is $C^1$. By the well known Morse--Sard Theorem 
(see e.g.\ \cite{H}) it is possible to choose an element $\bar b\in B$ which is a regular value 
for $\pi|_X$. Let us show that, with such a choice of $\bar b$, $(0,0)\in\R^k\times\R^s$ is a 
regular value of the map $\F_{\bar b}=\F(\cdot,\cdot,\bar b)$.

To see that, we need to show that for any $(p,q)\in\F_{\bar b}^{-1}(0,0)$, the differential
$d_{(p,q)}\F_{\bar b}:T_{(p,q)}(\R^k\times\R^s)\to T_{(0,0)}(\R^k\times\R^s)$ of $\F_{\bar b}$ 
at $(p,q)$ is surjective. We will prove that, given any $\alpha=(\alpha_1,\alpha_2)\in T_{(p,q)}(\R^k\times\R^s)=\R^k\times\R^s$, there exists $v\in T_{(0,0)}(\R^k\times\R^s)=\R^k\times\R^s$ such that $d_{(p,q)}\F_{\bar b} v=\alpha$.

Denote by $d_{(p,q,\bar b)}\F$ the differential  of $\F$ at $(p,q,\bar b)$. Since 
$(0,0)\in\R^k\times\R^s$ is a regular value for $\F$, there exists an element $(w_1,w_2,e)$ 
of $\R^k\times\R^s\times\R^k$ (i.e.\ of the tangent space to $\R^k\times\R^s\times B$ at 
$(p,q,\bar b)$) such that $d_{(p,q,\bar b)}\F(w_1,w_2,e)=\alpha$. Moreover, since $\bar b$ is 
a regular value for $\pi|_X$, the differential of $\pi|_X$ at $(p,q,\bar b)$ 
\[
d_{(p,q,\bar b)}\pi|_X:T_{(p,q,\bar b)}X\to T_{\bar b}B =\R^k
\]
is surjective. Thus, there exists an element $(u_1,u_2,e)\in T_{(p,q,\bar b)}X$ such that
\[
d_{(p,q,\bar b)}\pi|_X(u_1,u_2,e)=e.
\] 
Observe that $d_{(p,q,\bar b)}\F(u_1,u_2,e)=(0,0)$ 
because, as it is well known, $T_{(p,q,\bar b)}X=\ker d_{(p,q,\bar b)}\F$. Thus, taking $v=(w_1-u_1,w_2-u_2)$, and $\bar v=(w_1-u_1,w_2-u_2,0)$
we have
\begin{align*}
d_{(p,q)}\F_{\bar b} v &= d_{(p,q,\bar b)}\F\bar v
                    = d_{(p,q,\bar b)}\F (w_1-u_1,w_2-u_2,0)\\
                    &= d_{(p,q,\bar b)}\F\big( (w_1,w_2,e)-(u_1,u_2,e)\big)\\
                    &= d_{(p,q,\bar b)}\F(w_1,w_2,e)=\alpha.
\end{align*}
Thus, $(0,0)\in\R^k\times\R^s$ is a regular value of $\F_{\bar b}$ as claimed. 

To conclude the proof it is now sufficient to define $f_\e(p,q)=\tilde f(p,q)+\bar b$ for all 
$(p,q)\in U$.
\end{proof}

\begin{proof}[Proof of Theorem \ref{formuladeg}.]

 The first part of the assertion is an obvious consequence of the fact that $F^{-1}(0,0)$ 
coincides with the set $\{(p,q)\in M:\Psi(p,q)=(0,0)\}$. 

We now proceed to prove \eqref{idgradi}. Let $\s$ be the constant sign of $\det\partial_2g(p,q)$ 
in the connected set $U$. The following formula: 
\begin{equation}\label{idgradi_ws}
 \deg(\Psi,M)=\s  \deg(F,U),
\end{equation}
obviously imply \eqref{idgradi}. Let us prove \eqref{idgradi_ws}. Let $V$ be an open and 
bounded subset of $U$ with the property that the closure $\cl{V}$ of $V$ is contained in 
$U$. Assume that $F^{-1}(0,0)\subseteq V$, clearly one has that $\Psi^{-1}(0,0)$ is contained 
in $V$ as well and, by the excision property of the degree of a vector field, we get
\[
 \deg(F,U)=\deg(F,V),\qquad \deg(\Psi, M)=\deg(\Psi,V\cap M).
\]
Therefore it is sufficient to prove that $\deg(\Psi,V\cap M)=\s\deg(F,V)$. 

Let $\e=\min\{|F(p,q)|_{k+s}:(p,q)\in\Fr(V)\}$. By Lemma \ref{approxf}, one can find a 
$C^1$ map $f_\e:U\to\R^k$, with 
\begin{equation}\label{smallper}
  \sup_{(p,q)\in U}\big|f_\e(p,q)-f(p,q)\big|_k<\e,
\end{equation}
and such that $(0,0)$ is a regular value of $F_\e:U\to\R^k\times\R^s$ given by 
$F_\e(p,q)=\big(f_\e(p,q),g(p,q)\big)$. Consider $\Psi_\e:M\to\R^k\times\R^s$
given by 
\[
\Psi_\e(p,q)=
      \left(f_\e(p,q),-[\partial_2g(p,q)]^{-1}\partial_1g(p,q)f_\e(p,q)\right)
\]
for any $(p,q)\in M$. Clearly $\Psi_\e$ is tangent to $M$. By \eqref{smallper} we have 
\[
  \max_{(p,q)\in\Fr(V)}\big|F_\e(p,q)-F(p,q)\big|_{k+s}<\e,
\]
so that, as in Remark \ref{sper}, we have that $\deg(F_\e,V)=\deg(F,V)$. Also, the homotopy 
$(p,q;\lambda)\mapsto \lambda\Psi_\e(p,q)+(1-\lambda)\Psi(p,q)$ is admissible on $V\cap M$
since its $\R^k$-component never vanishes for $(p,q;\lambda)\in\Fr(V\cap M)\times[0,1]$. 
Thus, $\deg(\Psi_\e,V\cap M)=\deg(\Psi,V\cap M)$. Therefore, it is sufficient to show that $\deg(\Psi_\e,V\cap M)=\s\deg(F_\e,V)$.
 
As with $F$ and $\Psi$, one has that $F_\e(p,q)=(0,0)$ if and only if $(p,q)\in M$ and 
$\Psi_\e(p,q)=(0,0)$. Since $(0,0)$ is a regular value of $F_\e$, all the zeros of 
$F_\e$ are nondegenerate, thus isolated. Since $\cl{V}$ is compact, $F_\e^{-1}(0,0)$ 
is finite. 
Let $F_\e^{-1}(0,0)=\{(p_i,q_i)\}_{i=1,\ldots,n}$. Clearly, for each $i=1,\ldots,n$, 
$(p_i,q_i)\in M$ and $(p_i,q_i)$ is an isolated zero of $\Psi_\e$. From \eqref{sommasegni}
and \eqref{sommaindici} we have
\begin{gather*}
 \deg(F_\e,V)=\sum_{i=1}^n\sign\det d_{(p_i,q_i)}F_\e\\
 \deg(\Psi_\e,V\cap M)=\sum_{i=1}^n\idx\big(\Psi_\e,(p_i,q_i)\big)
\end{gather*}
The assertion follows if we prove that
\begin{equation}\label{indiciloc}
 \idx\big(\Psi_\e,(p_i,q_i)\big)
      =\big(\sign\det\partial_2 g(p_i,q_i)\big)\,\big(\sign\det d_{(p_i,q_i)}F_\e\big)
\end{equation}
for $i=1,\ldots,n$.

Let $i\in\{1,\ldots,n\}$ be fixed. In order to compute $\sign\det d_{(p_i,q_i)}F_\e$ we 
write $d_{(p_i,q_i)}F_\e$ in block-matrix form:
\[
 d_{(p_i,q_i)}F_\e = 
\begin{pmatrix}
 \partial_1 f_\e(p_i,q_i) & \partial_2 f_\e(p_i,q_i)\\
\partial_1 g(p_i,q_i) & \partial_2 g(p_i,q_i)
\end{pmatrix}.
\]
Being $\det\partial_2 g(p_i,q_i)\neq 0$, the so-called generalized Gauss algorithm (see 
e.g.\ \cite{G}) yields
\begin{equation}
\begin{split}\label{fsgnsch}
 \det d_{(p_i,q_i)} & F_\e = \det\partial_2 g(p_i,q_i)\cdot\\
     &\cdot\det \Big(\partial_1 f_\e(p_i,q_i)-\partial_2 f_\e(p_i,q_i)
                     \big(\partial_2 g(p_i,q_i)\big)^{-1}\partial_1 g(p_i,q_i)\Big).
\end{split}
\end{equation}

Let $W_i$ be a neighborhood of $(p_i,q_i)$ in $\R^k\times\R^s$ such that $F_\e(p,q)\neq(0,0)$ 
for any $(p,q)\in W_i\setminus\{(p_i,q_i)\}$. Clearly, $\Psi_\e(p,q)\neq (0,0)$ for any 
$(p,q)\in W_i\cap M\setminus\{(p_i,q_i)\}$. Without loss of generality we can assume that
$W_i=U_i\times V_i$ for appropriate open sets $U_i\subseteq\R^k$ and $V_i\subseteq\R^s$.

Since $\partial_2g(p_i,q_i)$ is invertible, the implicit function theorem implies that, taking
a smaller $U_i$ if necessary, we can assume that there exists a $C^1$ function 
$\gamma_i:U_i\to\R^s$ such that $g\big(p,\gamma_i(p)\big)=0$ for any $p\in U_i$. The continuity
of $\gamma_i$ imply that, taking again a smaller $U_i$ if necessary, we can assume 
$\gamma_i(U_i)\subseteq V_i$. Thus the map $G_i:p\mapsto\big(p,\gamma_i(p)\big)$ is a 
diffeomorphism of $U_i$ onto $W_i\cap M$, its inverse being the projection 
$\pi:W_i\cap M\to U_i$ given by $\pi(p,q)=p$.

The property of invariance under diffeomorphisms of the degree of tangent vector fields implies
that
\[
 \deg(\Psi_\e,W_i\cap M)=\deg\big(\pi\circ\Psi_\e\circ G_i,U_i).
\]
Notice that $p_i$ is an isolated zero of $\pi\circ\Psi_\e\circ G_i$. The differential
of this map at $p_i$ is 
\[
 \partial_1 f_\e(p_i,q_i)-\partial_2 f_\e(p_i,q_i)
\big(\partial_2 g(p_i,q_i)\big)^{-1}\partial_1 g(p_i,q_i)
\]
(recall that $q_i=\gamma_i(p_i)$). By \eqref{fsgnsch} and the fact that $(0,0)$ is a regular value 
for $F_\e$, it follows that this differential is invertible. Therefore we have 
\begin{equation}
\begin{split}\label{idxpsi}
\idx\big(\Psi_\e, & (p_i,q_i)\big)=\\
  &=\sign\det \Big(\partial_1 f_\e(p_i,q_i)-\partial_2 f_\e(p_i,q_i)
                     \big(\partial_2 g(p_i,q_i)\big)^{-1}\partial_1 g(p_i,q_i)\Big).
\end{split}
\end{equation}
Equations \eqref{fsgnsch} and \eqref{idxpsi} clearly imply \eqref{indiciloc}. The
assertion follows.
\end{proof}

\begin{remark}\label{fprec}
Let $U$, $f$, $g$, $M$, $\Psi$ and $F$ be as in Theorem \ref{formuladeg}. In fact, an
inspection of its proof reveals that we have proved a slightly more precise, albeit 
less elegant, formula concerning $\deg(\Psi,M)$. Namely,
\begin{equation*}
 \deg(\Psi,M)=\sign\big(\det\partial_2g(p,q)\big)  \deg(F,U).
\end{equation*}
(Recall that $\det\partial_2g(p,q)$ has constant sign for $(p,q)$ in the connected set $U$.)
\end{remark}

Let us illustrate Theorem \ref{formuladeg} with two examples.

\begin{example}
Consider the following second order DAE in $\R\times\R$:
\begin{equation}\label{pozzo}
\left\{
\begin{array}{l}
 \ddot x = -x+y-\dot x,\\
 y^3+y-x^2 =0.
\end{array}\right.
\end{equation}
We rewrite \eqref{pozzo} as the following equivalent first order system in $U=\R^2\times\R$:
\begin{equation}\label{parab}
 \left\{
\begin{array}{l}
 \dot x_1 = x_2,\\
 \dot x_2= -x_1+y-x_2,\\
 y^3+y-x_1^2 =0.
\end{array}\right.
\end{equation}
Let $g:\R^2\times\R\to\R$ be given by $g(p_1,p_2;q)=q^3+q-p_1^2$. As in Section
\ref{assocVF}, Equation \eqref{parab} is equivalent to the ordinary differential equation 
$\dot\zeta=\Psi(\zeta)$ on $M=g^{-1}(0)\subseteq\R^2\times\R$ where 
\[
\Psi(p_1,p_2;q)=\left(p_2, -p_1+q-p_2\,;\,
                  \frac{2p_1p_2}{1+3q^2} \right)
\] 
for all $(p_1,p_2;q)\in M$. Computing $|\deg(\Psi,M)|$ directly from the expression of $\Psi$ 
is possible, of course. However, an easier way is to observe that by Theorem \ref{formuladeg}
we have $\big|\deg(\Psi,M)\big|=\big|\deg(F,U)\big|$, where $F:\R^2\times\R\to\R^2\times\R$ 
is given by 
\[
F(p_1,p_2;q)=\left(p_2, -p_1+q-p_2\,;\,q^3+q-p_1^2\right).
\]
A simple computation shows that the unique zero of $F$ is $(0,0;0)$ and that we have
$\deg(F,U)=1$. Hence, $\big|\deg(\Psi,M)\big|=1$. Actually, according to Remark \ref{fprec}, 
\[
 \deg(\Psi,M)=\sign\big(\det\partial_2 g(p_1,p_2;q)\big)\deg(F,U)=\deg(F,U)=1.
\]
\end{example}

\begin{example} \emph{(Index $2$ DAE in Hessenberg form.)}
Let $U_1$ and $U_2$ be open subsets of $\R^k$, and $\R^s$, respectively; and let 
$U=U_1\times U_2$. Consider the following DAE
\begin{equation}\label{tizio}
 \left\{
\begin{array}{l}
 \dot x= f(x,y),\\
 \gamma(x)=0,
\end{array}\right.
\end{equation}
where $f:U\to\R^k$ and $\gamma:U_1\to\R^s$ are $C^\infty$. Assume that 
$d_p\gamma[\partial_2f(p,q)]:\R^s\to\R^s$ is an invertible linear operator for all
$(p,q)\in U$. In this case \eqref{tizio} is an index $2$ differential-algebraic equation in 
Hessenberg form (see e.g.\ \cite{KM}). With a simple index reduction, we see that 
\eqref{tizio} is equivalent to the following DAE:
\begin{equation}\label{tiziorid}
 \left\{
\begin{array}{l}
 \dot x= f(x,y),\\
 d_x\gamma\big( f(x,y)\big)=0,
\end{array}\right.
\end{equation}
Let us set $g(p,q)=d_p\gamma\big(f(p,q)\big)$ for all $(p,q)\in U$. Then, 
\[
\partial_2g(p,q)=d_p\gamma\big(\partial_2f(p,q)\big):\R^s\to\R^s
\]
is invertible. The vector field $\Psi$, constructed as in Section \ref{assocVF} and 
tangent to $M=g^{-1}(0)$, has the following expression
\begin{multline*}
 \Psi(p,q)=
\Big(f(p,q)\,,\, -\left[d_p\gamma\big(\partial_2f(p,q)\big)\right]^{-1}\\
     \left\{d_p^2\gamma\big(f(p,q),f(p,q)\big)
             +d_p\gamma\big[\partial_1f(p,q)\big]f(p,q)\right\}\Big).
\end{multline*}
where the bilinear form $d_p^2\gamma\big(\cdot,\cdot\big)$ is the second differential of 
$\gamma$ at $p$. Theorem \ref{formuladeg} shows that the rather unappealing task of computing
$|\deg(\Psi,M)|$ reduces to the computation of the comparatively simpler $|\deg(F,U)|$
where,
\[
 F(p,q)=\Big(f(p,q)\,,\,d_p\gamma \big[f(p,q)\big]\Big).
\]
\end{example}

\section{The set of $T$-periodic solutions of \eqref{Tpert}}

This section is devoted to the study of the set of $T$-periodic solutions of equation
\eqref{Tpert}. Recall that $U\subseteq\R^k\times\R^s$ is open and connected, $f:U\to\R^k$, 
$g:U\to\R^s$ and $h:\R\times U\to\R^k$ are continuous, and we assume that $h$ is $T$-periodic 
in the first variable for a given $T>0$, and $g$ is $C^\infty$ with the property that  $\det\partial_2g(p,q)\neq 0$ for all $(p,q)\in U$.

We need to introduce some further notation: denote by $C_T(U)$ the metric subspace of 
the Banach space $C_T(\R^k\times\R^s)$ of all the continuous $T$-periodic functions taking 
values in $U$. We say that $(\mu;x,y)\in [0,\infty)\times C_T(U)$ is a \emph{solution pair 
of} \eqref{Tpert} if $(x,y)$ satisfies \eqref{Tpert} for $\lambda=\mu$; here the pair 
$(x,y)$ is thought of as a single element of $C_T(U)$.  It is convenient, given any 
$(p,q)\in\R^k\times\R^s$, to denote by $(\hat p,\hat q)$ the map in 
$C_T(\R^k\times\R^s)$ that is constantly equal to $(p,q)$. A solution pair of the form
$(0;\hat p,\hat q)$ is called \emph{trivial}. 

As mentioned in the Introduction, the main result of this section, Theorem \ref{rami}
below, follows from a combination of Theorem \ref{formuladeg} and an argument of 
\cite{FS98}, where most of the technical difficulties that arise when working with 
branches of solution pairs are solved (this fact explains the simplicity of the proof
of Theorem \ref{rami}). 

Let $F:U\to\R^k\times\R^s$ be given by 
\eqref{defF}. As one immediately checks, $(\hat p,\hat q)$ is a constant solution of 
\eqref{Tpert} corresponding to $\lambda=0$ if and only if $F(p,q)=(0,0)$. Thus, with this 
notation, the set of trivial solution pairs can be written as 
\[
\{(0;\hat p,\hat q)\in [0,\infty)\times C_T(U): F(p,q)=(0,0)\}.
\] 

Given $\Omega\subseteq[0,\infty)\times C_T(U)$, with $U\cap\Omega$ we denote the set of 
points of $U$ that, regarded as constant functions, lie in $\Omega$. Namely,
\[
  U\cap\Omega=\{(p,q)\in U: (0;\hat p,\hat q)\in\Omega\}.
\]

We are now ready to state and prove our main result concerning the $T$-periodic solutions
of  \eqref{Tpert}. 

\begin{theorem}\label{rami}
Let $U\subseteq\R^k\times\R^s$ be open and connected. Let $g:U\to\R^s$, $f:U\to\R^k$,
$h:\R\times U\to\R^k$ and $T>0$ be such that $f$ and $h$ are continuous, $h$ is 
$T$-periodic in the first variable,
and $g$ is $C^\infty$ with $\partial_2g(p,q)$ invertible for all $(p,q)\in U$. Let also 
$F(p,q)=\big(f(p,q),g(p,q)\big)$. Given $\Omega\subseteq[0,\infty)\times C_T(U)$ open, 
assume $\deg(F,U\cap\Omega)$ is well-defined and nonzero. Then, there exists a 
connected set $\Gamma$ of nontrivial solution pairs of \eqref{Tpert} whose closure in 
$[0,\infty)\times C_T(U)$ meets the set $\{(0,\hat p,\hat q)\in\Omega:F(p,q)=(0,0)\}$ 
and is not contained in any compact subset of $\Omega$.
\end{theorem}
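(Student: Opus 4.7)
The plan is to reduce Theorem \ref{rami} to the corresponding branching statement for the equivalent ordinary differential equation \eqref{TeqonM} on the manifold $M = g^{-1}(0)$, and then apply the result of \cite{FS98} on $T$-periodic perturbations of autonomous ODEs on differentiable manifolds. From Section \ref{assocVF}, a pair $(\mu;x,y)\in[0,\infty)\times C_T(U)$ is a solution pair of \eqref{Tpert} if and only if $\zeta=(x,y)$ takes values in $M$ and satisfies $\dot\zeta=\Psi(\zeta)+\mu\Upsilon(t,\zeta)$. Since $M$ is closed in $U$ and convergence in $C_T(U)$ is uniform, the subspace $C_T(M)$ is closed in $C_T(U)$, and every solution pair of \eqref{Tpert} automatically lies in $[0,\infty)\times C_T(M)$.

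I would then set $\widetilde\Omega=\Omega\cap\bigl([0,\infty)\times C_T(M)\bigr)$, which is open in $[0,\infty)\times C_T(M)$, and observe that $(p,q)\in M\cap\widetilde\Omega$ is a zero of $\Psi$ exactly when $(p,q)\in U\cap\Omega$ is a zero of $F$: indeed, $F(p,q)=(0,0)$ already forces $g(p,q)=0$, hence $(p,q)\in M$, and the $\R^k$-components of $F$ and $\Psi$ coincide. Applying Theorem \ref{formuladeg} on each connected component of $U\cap\Omega$ (on each of which $\sign\det\partial_2g$ is constant, as recorded in Remark \ref{fprec}), the degree $\deg(\Psi,M\cap\widetilde\Omega)$ is well-defined and
\[
\bigl|\deg(\Psi,M\cap\widetilde\Omega)\bigr|=\bigl|\deg(F,U\cap\Omega)\bigr|\neq 0.
\]

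With this degree condition in hand, I would invoke the branching result of \cite{FS98} for the autonomous equation $\dot\zeta=\Psi(\zeta)$ on the $C^\infty$ manifold $M$, perturbed by $\Upsilon$, on the open set $\widetilde\Omega$. This furnishes a connected set $\Gamma$ of nontrivial solution pairs of \eqref{TeqonM} whose closure in $[0,\infty)\times C_T(M)$ meets $\{(0;\hat p,\hat q)\in\widetilde\Omega:\Psi(p,q)=(0,0)\}$ and is not contained in any compact subset of $\widetilde\Omega$. Via the equivalence above, $\Gamma$ is also a connected set of nontrivial solution pairs of \eqref{Tpert}. The only delicate point, which I expect to be the main obstacle, is the transfer of the topological conclusion from $\widetilde\Omega$ to the larger open set $\Omega\subseteq[0,\infty)\times C_T(U)$: because $C_T(M)$ is closed in $C_T(U)$, the closure of $\Gamma$ in $[0,\infty)\times C_T(U)$ equals its closure in $[0,\infty)\times C_T(M)$, so the meeting condition with the set of trivial solution pairs carries over directly; and if $\Gamma$ were contained in a compact $K\subseteq\Omega$, then $K\cap\bigl([0,\infty)\times C_T(M)\bigr)$ would be a compact subset of $\widetilde\Omega$ containing $\Gamma$, contradicting the output of \cite{FS98}. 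Apart from this transfer, the argument is a direct combination of Theorem \ref{formuladeg} with a black-box application of \cite{FS98}, which explains the brevity of the proof.
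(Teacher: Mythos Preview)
Your proposal is correct and follows essentially the same route as the paper's proof: reduce to the equivalent ODE \eqref{TeqonM} on $M$, use Theorem \ref{formuladeg} to conclude $\deg(\Psi,M\cap\Omega)\neq 0$, invoke Theorem 3.3 of \cite{FS98}, and then transfer the conclusion back to $[0,\infty)\times C_T(U)$ using that $C_T(M)$ is closed in $C_T(U)$. You are in fact slightly more explicit than the paper on two points it leaves implicit: the componentwise application of Theorem \ref{formuladeg} when $U\cap\Omega$ is disconnected (harmless here, since $\sign\det\partial_2g$ is already constant on the connected set $U$), and the argument that a compact $K\subseteq\Omega$ containing $\cl\Gamma$ would yield the compact set $K\cap\bigl([0,\infty)\times C_T(M)\bigr)\subseteq\widetilde\Omega$, contradicting \cite{FS98}.
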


\begin{proof}
Denote by $C_T(M)$  metric subspace of the Banach space $C_T(\R^k\times\R^s)$, of all 
the continuous $T$-periodic functions taking values in $M=g^{-1}(0)$. Let $\Psi$ and 
$\Upsilon$ be as in \eqref{campiv}. Then \eqref{Tpert} is equivalent to \eqref{TeqonM} 
on $M$.

By Theorem \ref{formuladeg} we have that $\deg(\Psi,M\cap\Omega)\neq 0$, here by
$M\cap\Omega$ we mean the set $\{ (p,q)\in M:(0;\hat p,\hat q)\in\Omega\}$. Theorem 3.3 
of \cite{FS98} implies the existence of a connected subset $\Gamma$ of
\[
\big\{ (\lambda;x,y)\in\big([0,\infty)\times C_T(M)\big)\cap\Omega:\text{$(x,y)$ is a 
nonconstant solution of \eqref{TeqonM}}\big\}
\]
whose closure $\cl{\Gamma}$ in $[0,\infty)\times C_T(M)$ is not contained in any compact 
subset of $\big([0,\infty)\times C_T(M)\big)\cap\Omega$ and meets the set 
\[
\big\{(0;\hat p,\hat q)\in\Omega:\Psi(p,q)=(0,0)\big\},
\]
that coincides with $\big\{(0,\hat p,\hat q)\in\Omega:F(p,q)=(0,0)\big\}$.

Clearly, each $(\lambda;x,y)\in\Gamma$ is a nontrivial solution pair of \eqref{Tpert}.
Since $M$ is closed in $U$, it is not difficult to prove that 
$[0,\infty)\times C_T(M)$ is closed in $[0,\infty)\times C_T(U)$. Thus, $\cl\Gamma$ 
coincides with the closure of $\Gamma$ in $[0,\infty)\times C_T(U)$. Consequently 
$\Gamma$ satisfies the assertion.
\end{proof}

\begin{example}
Consider the $2\pi$-periodically perturbed DAE in $U=\R\times (-1,1)$
\begin{equation}\label{equivlien}
 \left\{
\begin{array}{l}
 \dot x = -y- \lambda\sin t,\\
 x-\frac{1}{3}y^3 +y =0,
\end{array}\right.
\end{equation}
that is obtained by writing in the Li\'enard plane the following forced van der Pol 
differential equation and taking the limit as $\e\to 0$:
\[
\e\ddot y+ (y^2-1)\dot y+y+\lambda\sin t=0, \quad y\in (-1,1).
\]
Let $F(p,q)=\big(-q,p-\frac{1}{3}q^3 +q\big)$, and put $\Omega=[0,\infty]\times C_{2\pi}(U)$. 
Clearly, one has $U=U\cap\Omega$, $F^{-1}(0,0)=\{(0,0)\}$ and $\deg(F,U)=1$. Theorem 
\ref{rami}, yields a connected set $\Gamma$ of nontrivial solution pairs of \eqref{equivlien} 
whose closure in $[0,\infty)\times C_T(U)$ meets the trivial solution pair 
$\{(0;\hat 0,\hat 0)\}$ and is not compact. (Here $\hat 0$ denotes the identically zero 
function in $\R$.) 

As one immediately checks, the only $2\pi$-periodic solution of \eqref{equivlien}, for
$-1<y<1$ and $\lambda=0$, is $\{(\hat 0,\hat 0)\}$. Thus, by the connectedness of $\Gamma$ 
one can deduce that \eqref{equivlien} admits a $2\pi$-periodic solution for sufficiently 
small values of $\lambda>0$.
\end{example}

The result of Theorem \ref{rami} is slightly more intuitive when $M=g^{-1}(0)$ is 
closed in $\R^k\times\R^s$ (as, for instance, if $U=\R^k\times\R^s$). In fact, in 
this case, the metric subspace $C_T(M)\subseteq C_T(\R^k\times\R^s)$, that consists 
of all continuous $T$-periodic and $M$-valued functions, is complete. In this situation,
we deduce the following \emph{Continuation Principle} from Theorem \ref{rami}.

\begin{corollary}\label{corami}
Let $f$, $g$, $h$, $U$, $M$, $T$, $F$ and $\Omega$ be as in Theorem \ref{rami}. Assume 
also that $M=g^{-1}(0)$ is closed in $\R^k\times\R^s$. Let $\deg(F,U\cap\Omega)$ be nonzero.
Then there exists a connected component of the set of solution pairs of \eqref{Tpert}
that meets $\{(0,\hat p,\hat q)\in\Omega:F(p,q)=(0,0)\}$ and cannot be both bounded 
and contained in $\Omega$.

If, in particular, $\Omega=[0,\infty)\times A$, with $A\subseteq C_T(U)$ open, bounded, 
and such that there are no $T$-periodic solutions of \eqref{Tpert} on the boundary 
$\Fr (A)$ of $A$ for $\lambda\in [0,1]$, then equation 
\begin{equation}\label{contin}
 \left\{
\begin{array}{l}
 \dot x=f(x,y)+h(t,x,y),\\
 g(x,y)=0.
\end{array}
\right.
\end{equation}
admits a $T$-periodic solution in $A$.
\end{corollary}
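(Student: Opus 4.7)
The plan is to derive both statements from Theorem \ref{rami} together with the closedness of $M$. Applying Theorem \ref{rami}, I obtain a connected set $\Gamma$ of nontrivial solution pairs whose closure in $[0,\infty)\times C_T(U)$ meets $\mathcal{Z}:=\{(0;\hat p,\hat q)\in\Omega:F(p,q)=(0,0)\}$ and is not contained in any compact subset of $\Omega$. A standard continuous-dependence argument (uniform limits of solutions of \eqref{Tpert} are again solutions) shows that the set $\mathcal{S}$ of all solution pairs of \eqref{Tpert} is closed in $[0,\infty)\times C_T(U)$. Let $\mathcal{C}$ be the connected component of $\mathcal{S}$ containing $\cl{\Gamma}$; then $\mathcal{C}$ is closed and meets $\mathcal{Z}$.

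For the first assertion, I show that $\mathcal{C}$ cannot be both bounded and contained in $\Omega$. If it were, then, since $M$ is closed in $\R^k\times\R^s$, boundedness would confine the values $\bigl(x(t),y(t)\bigr)$ of pairs in $\mathcal{C}$ to a compact subset $K\subseteq M$. The identity \eqref{soly} and the continuity of $f$, $h$, $\partial_1 g$, and $[\partial_2 g]^{-1}$ on $K$ yield a uniform bound on the derivatives $\dot x,\dot y$, so Ascoli--Arzel\`a gives precompactness of $\mathcal{C}$; closedness of $\mathcal{C}$ then makes it a compact subset of $\Omega$ containing $\cl{\Gamma}$, contradicting Theorem \ref{rami}.

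For the second assertion, with $\Omega=[0,\infty)\times A$ as prescribed, I argue by contradiction and assume \eqref{contin} has no $T$-periodic solution in $A$. Set
\[
\mathcal{C}'=\mathcal{C}\cap\bigl([0,1]\times\cl{A}\bigr),
\]
which is closed in $\mathcal{C}$ and nonempty, as $\mathcal{C}$ already contains a point of $\mathcal{Z}$. The decisive step is to show that $\mathcal{C}'$ is also open in $\mathcal{C}$: any limit in $\mathcal{C}'$ of a sequence from $\mathcal{C}\setminus\mathcal{C}'$ must satisfy either $\lambda\in[0,1]$ with $(x,y)\in\Fr(A)$, or $\lambda=1$ with $(x,y)\in\cl{A}$. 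The first is excluded by the boundary hypothesis; the second splits into $(x,y)\in\Fr(A)$, again excluded by the boundary hypothesis applied at $\lambda=1$, and $(x,y)\in A$, excluded by the contradiction assumption. Connectedness of $\mathcal{C}$ then forces $\mathcal{C}'=\mathcal{C}$. Since $\mathcal{C}'\cap\bigl([0,1]\times\Fr(A)\bigr)=\emptyset$ and $A$ is open, this gives $\mathcal{C}\subseteq[0,1]\times A\subseteq\Omega$, bounded---contradicting the first part.

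I expect the main subtlety to be the open/closed dichotomy for $\mathcal{C}'$: one must carefully enumerate the possible limit points of $\mathcal{C}\setminus\mathcal{C}'$ lying in $\mathcal{C}'$ and invoke the two hypotheses (no solutions on $\Fr(A)$ for $\lambda\in[0,1]$, and no solutions in $A$ at $\lambda=1$) at exactly the right places. The compactness step in the first part is essentially routine once \eqref{soly} is in hand.
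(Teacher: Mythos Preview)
Your proof is correct and follows essentially the same route as the paper: apply Theorem~\ref{rami}, pass to the connected component $\mathcal{C}$ (the paper calls it $\Sigma$) of the set of solution pairs, and use closedness of $M$ together with Ascoli--Arzel\`a to rule out the ``bounded and contained in $\Omega$'' alternative. For the continuation statement the paper argues more tersely that $\Sigma$, being connected and not wholly contained in $\Omega$, must cross the boundary of $[0,\infty)\times A$ and hence hit $\{1\}\times A$; your clopen argument on $\mathcal{C}'=\mathcal{C}\cap([0,1]\times\cl{A})$ is a more explicit unpacking of exactly that reasoning.
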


\begin{proof}
By Theorem \ref{rami}, there exists a connected set $\Gamma$ of nontrivial solution 
pairs of \eqref{Tpert} whose closure $\cl\Gamma$ in $[0,\infty)\times C_T(U)$ meets 
the set $\{(0,\hat p,\hat q)\in\Omega:F(p,q)=(0,0)\}$ and is not contained in any 
compact subset of $\Omega$. Let $\Sigma$ be the connected component of the set of all
solution pairs that contains $\cl\Gamma$.

Since $M\subseteq\R^k\times\R^s$ is closed, the metric space $[0,\infty)\times C_T(M)$ 
is complete. Moreover, the Ascoli-Arzel\`a Theorem implies that any bounded set of 
$T$-periodic solutions of \eqref{TeqonM} is totally bounded. Thus, if $\Sigma$ is 
bounded, then it is also compact. If, in addition, $\Sigma$ is contained in $\Omega$ 
then so is $\Gamma\subseteq\Sigma$, which is impossible. This contradiction proves that 
$\Sigma$ cannot be both bounded and contained in $\Omega$.

To prove the last part of the assertion observe that $\Sigma$ is connected and that
$\emptyset\neq\Sigma\cap\Omega\neq\Sigma$. Thus, $\Sigma$ necessarily meets the boundary 
of $\Omega=[0,\infty)\times A$. Since there are no solution pairs of \eqref{Tpert} in 
$[0,1]\times\Fr A$, one has that $\Sigma$ intersects $\{1\}\times A$. 
\end{proof}

\begin{remark}
A practical method of applying Corollary \eqref{corami} is to consider a relatively
compact open subset $V$ of $U$ with the following properties:
\begin{itemize}
  \item the set $F^{-1}(0,0)\cap V$ is compact and  $\deg(F,V)\neq 0$;
  \item there is no $T$-periodic solution of \eqref{Tpert} whose image intersects 
        the boundary of $V$. (This last point might be difficult to verify and is usually 
            proved by the means of a priori bounds.)
\end{itemize}
In this situation, taking $A= C_T(V)$ and $\Omega=[0,\infty)\times A$, we have
$U\cap\Omega=V$ and
\[
 \deg(F,U\cap\Omega)=\deg(F,V)\neq 0.
\]
Hence, Corollary \ref{corami} yields a $T$-periodic solution of \eqref{contin}.
\end{remark}

The next two subsections, that are meant mainly as illustrations of Theorem \ref{rami} 
and of its main consequence Corollary \ref{corami}, are each devoted to a quite different 
application.

\subsection{Example of application to multiplicity results}
This subsection is devoted to some multiplicity results that can be deduced from 
Theorem \ref{rami} and from its Corollary \ref{corami}. Throughout this subsection $f$, 
$g$, $h$, $U$, $T$ and $F$ will be as in Theorem \ref{rami} and, in addition, we 
will assume that $f$ is $C^1$.

In order to obtain multiplicity results, we combine the global approach of
Theorem \ref{rami} with a local analysis of the set of $T$-periodic solutions. 
Let $(p_0,q_0)$ be an isolated zero of $F$. Since $\partial_2g(p_0,q_0)$ is invertible, 
we can locally ``decouple'' \eqref{Tpert}. Namely, by the Implicit Function Theorem, 
there exist neighborhoods $V\subseteq\R^k$ of $p_0$ and $W\subseteq\R^s$ of $q_0$, and 
a function $\gamma:V\to\R^s$ such that $g^{-1}(0)\cap V\times W$ is the graph of $\gamma$. 
Thus, in $V\times W$, equation \eqref{Tpert} can be written as
\begin{subequations}\label{bidec}
\begin{equation}\label{decoup} 
\dot x= f\big(x,\gamma(x)\big)+\lambda h\big(t,x,\gamma(x)\big),
\end{equation}
\begin{equation}
\quad y=\gamma(x).
\end{equation}
\end{subequations}
We will say that $(p_0,q_0)$ is a $T$-resonant zero of $F$ if the following linearization, for 
$\lambda=0$, of \eqref{decoup} at $(p_0,q_0)$:
\begin{equation}\label{linearz}
\dot\xi=\big[\partial_1 f(p_0,q_0)
             -\partial_2f(p_0,q_0)d_{(p_0,q_0)}\gamma\big]\xi 
\end{equation}
admits nonzero $T$-periodic solutions (note that \eqref{linearz} is an ordinary differential 
equation in $\R^k$).

A simple computation shows that $(p_0,q_0)$ is $T$-resonant if and only if the
following linear endomorphism of $\R^k$:
\begin{equation}\label{diffP}
\partial_1 f(p_0,q_0)
             -\partial_2f(p_0,q_0)[\partial_2g(p_0,q_0)]^{-1}\partial_1g(p_0,q_0) 
\end{equation}
has eigenvalues of the form $2n\pi i/T$, where $n\in\N\cup\{0\}$, and $i$ denotes the 
imaginary unit. Also, the generalized Gauss algorithm, as in the proof of Theorem 
\ref{formuladeg}, yields
\begin{multline*}
 \det d_{(p_0,q_0)}F=\det\big(\partial_2g(p_0,q_0)\big)\cdot\\
                      \cdot\det\Big(\partial_1 f(p_0,q_0)
             -\partial_2f(p_0,q_0)[\partial_2g(p_0,q_0)]^{-1}\partial_1g(p_0,q_0)\Big).
\end{multline*}
Thus, if $(p_0,q_0)$ is non-$T$-resonant, then it is a nondegenerate zero of $F$. Hence, 
$\idx\big(F,(p_0,q_0)\big)\neq 0$.

 From Theorem \ref{rami} we get the following lemma:

\begin{lemma} \label{rametto}
Assume that $f$, $g$, $h$, $U$, $T$ and $F$ be as in Theorem \ref{rami}. Assume also that
$f$ is $C^1$ and let $(p_0,q_0)$ be a non-$T$-resonant zero of $F$. Then
\begin{enumerate}
\item the trivial $T$-pair $(0;\hat p_0,\hat q_0)$ is isolated in the set of 
$T$\hbox{-}pairs corresponding to $\lambda =0$;
\item there exists a connected set of nontrivial $T$\hbox{-}pairs of \eqref{Tpert} whose 
closure in $[0,\infty)\times C_T(U)$ contains $(0;\hat p_0,\hat q_0)$ and is noncompact or 
intersects the set 
\[
\big\{(0;\hat p,\hat q)\in [0,\infty)\times C_T(U):F(p,q)=(0,0)\big\}\setminus
  \{(0;\hat p_0,\hat q_0)\}
\]
\end{enumerate}
\end{lemma}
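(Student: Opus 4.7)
The strategy for part (1) is standard. Since $\partial_2 g(p_0,q_0)$ is invertible, I use the implicit function theorem as in \eqref{bidec} to decouple \eqref{Tpert} at $\lambda=0$ locally as $\dot x=f(x,\gamma(x))$, whose equilibrium $p_0$ has linearization \eqref{linearz}. Non-$T$-resonance is precisely the statement that the monodromy of \eqref{linearz} has no eigenvalue $1$, equivalently that $d\Phi_T(p_0)-I$ is invertible, where $\Phi_T$ is the Poincar\'e time-$T$ map of the reduced ODE. The inverse function theorem applied to $x_0\mapsto\Phi_T(x_0)-x_0$ then produces a neighborhood of $p_0$ in which $p_0$ is the only fixed point of $\Phi_T$; combined with uniqueness of solutions of the reduced ODE ($f$ being $C^1$), this yields isolation of $(0;\hat p_0,\hat q_0)$ among $T$-pairs with $\lambda=0$.

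For part (2), the plan is to apply Theorem \ref{rami} with $\Omega$ chosen as large as possible, excluding only the other trivial pairs. The computation immediately preceding the lemma shows that non-$T$-resonance forces $\det d_{(p_0,q_0)}F\neq 0$, so $(p_0,q_0)$ is a nondegenerate---hence isolated---zero of $F$ with $\idx(F,(p_0,q_0))\neq 0$. I then set
\[
E:=\big\{(0;\hat p,\hat q)\in[0,\infty)\times C_T(U):F(p,q)=0\text{ and }(p,q)\neq(p_0,q_0)\big\}
\]
and take $\Omega:=\big([0,\infty)\times C_T(U)\big)\setminus E$. A short topological check (the constant-function embedding $U\hookrightarrow C_T(U)$ is a homeomorphism onto a closed subspace, and $F^{-1}(0,0)\setminus\{(p_0,q_0)\}$ is closed in $U$ by the isolation of $(p_0,q_0)$) shows that $E$ is closed, hence $\Omega$ is open. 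By construction $(p_0,q_0)$ is the only zero of $F$ in $U\cap\Omega$, and by excision $\deg(F,U\cap\Omega)=\idx(F,(p_0,q_0))\neq 0$.

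Theorem \ref{rami} then supplies a connected set $\Gamma$ of nontrivial $T$-pairs whose closure $\cl\Gamma$ in $[0,\infty)\times C_T(U)$ contains $(0;\hat p_0,\hat q_0)$ (the unique trivial pair inside $\Omega$) and is not contained in any compact subset of $\Omega$. Taking $\Sigma=\Gamma$ settles the dichotomy at once: either $\cl\Gamma$ is noncompact in $[0,\infty)\times C_T(U)$, giving the first alternative of (2); or $\cl\Gamma$ is compact, in which case it cannot lie inside $\Omega$ (else $\cl\Gamma$ itself would be a compact subset of $\Omega$ containing $\cl\Gamma$), so $\cl\Gamma$ must meet the complement $E$ of $\Omega$, producing a trivial pair distinct from $(0;\hat p_0,\hat q_0)$ and giving the second alternative.

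The step I expect to be the main obstacle is the closedness of $E$: it requires combining the isolation of $(p_0,q_0)$ in $F^{-1}(0,0)$ with the topological fact that the constant functions form a closed subspace of $C_T(U)$, so that a sup-norm limit of constants is again constant. Part (1) is also mildly technical because the equivalence between the spectral non-$T$-resonance condition on \eqref{linearz} and the invertibility of $d\Phi_T(p_0)-I$, while classical, ought to be stated explicitly.
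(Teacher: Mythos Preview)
Your proposal is correct and follows essentially the same route as the paper: for (1) you locally decouple via the implicit function theorem and use that non-$T$-resonance makes $d\Phi_T(p_0)-I$ invertible, so the inverse function theorem isolates the fixed point; for (2) you take $\Omega$ to be the complement of the other trivial pairs, check it is open via the isolation of $(p_0,q_0)$ in $F^{-1}(0,0)$, compute $\deg(F,U\cap\Omega)=\idx\big(F,(p_0,q_0)\big)\neq 0$ by excision, and invoke Theorem~\ref{rami}. Your write-up is in fact slightly more precise than the paper's on two points: you correctly identify that it is $d\Phi_T(p_0)-I$ (not $d_{p_0}P$ itself) whose invertibility is needed, and you note that $U\cap\Omega$ is an open set whose only $F$-zero is $(p_0,q_0)$ rather than literally the singleton $\{(p_0,q_0)\}$.
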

\begin{proof} Let us prove the first part of the assertion. Assume by contradiction that 
there exists a sequence $\{(0;x_n,y_n)\}$, $n=1,2,\ldots$, of $T$-pairs of \eqref{Tpert} 
with $(x_n,y_n)\to (\hat p_0,\hat q_0)$ uniformly. If we put $p_n=x_n(0)$, we clearly have 
$p_n\to p_0$. We claim that this is not possible. 
Let $V$ and $\gamma$ be as in \eqref{bidec}. For any $p'\in V$, denote by $x(\cdot,p')$ 
the maximal solution of the Cauchy problem 
\[
 \left\{
\begin{array}{l}
 \dot x= f\big(x,\gamma(x)\big),\\
  x(0)=p'
\end{array}
\right.
\]
Well known results in the theory of ordinary differential equations imply that there exists
an open neighborhood $W\subseteq V$ of $p_0$ such that the map $P$, that to $p'\in W$ associates $x(T,p')\in\R^k$, is defined. Also, since $p\mapsto f\big(p,\gamma(p)\big)$ is continuous in 
$W\subseteq V$, we know that $P$ is $C^1$ in $W$ and that its differential $d_{p_0}P$ is given 
by \eqref{diffP}. Thus, since $(p_0,q_0)$ is a nondegenerate zero of $F$, the linear operator 
$d_{p_0}P$ is invertible. The claim now follows from the Inverse Function Theorem.

Let us prove the second part of the assertion. Since $(0;\hat p_0,\hat q_0)$ is isolated 
in the set of $T$-pairs corresponding to $\lambda =0$, the set 
\[
\big\{(0;\hat p,\hat q)\in [0,\infty)\times C_T(U):F(p,q)=(0,0)\big\}
\setminus\{(0;\hat p_0,\hat q_0)\}
\]
is closed. Thus, the set 
\begin{multline*}
 \Omega=\Big([0,\infty)\times C_T(U)\Big)\setminus\\
        \Big(\big\{(0;\hat p,\hat q)\in [0,\infty)\times C_T(U):F(p,q)=(0,0)\big\}
\setminus\{(0;\hat p_0,\hat q_0)\}\Big)
\end{multline*}
is open. As in Theorem \ref{rami}, we use the symbol $U\cap\Omega$ as a shorthand notation for 
the set $\{(p,q)\in U:(0,\hat p,\hat q)\in\Omega\}$. Since $(p_0,q_0)$ is non-$T$-resonant and 
since $U\cap\Omega$ is just the singleton $\{(p_0,q_0)\}$, from \eqref{sommaindici} we have
\[
 \deg(F,U\cap\Omega)=\idx\big(F,(p_0,q_0)\big)\neq 0.
\] 
By Theorem \ref{rami} there exists a connected set $\Gamma$ of $T$-pairs that meets
\[
\big\{(0;\hat p,\hat q)\in\Omega:F(p,q)=(0,0)\big\}=\{(0;\hat p_0,\hat q_0)\},
\]
and is such that its closure $\cl{\Gamma}$ in $[0,\infty)\times C_T(U)$ is not contained in 
any compact subset of $\Omega$. Hence $\Gamma$ satisfies the second part of the assertion.
\end{proof}

Let us introduce some notation. Let $Y$ be a metric space and $X$ a subset of 
$[0,\infty )\times Y$. Given $\lambda \geq 0$, we denote by $X _{\lambda}$ the slice 
$\big\{y\in Y:(\lambda ,y)\in X \big\}$. Recall the following notion from \cite{FPS00}:
We say that $A\subseteq X_0$ is an {\it ejecting set} (for $X$) if it is relatively open 
in $X_0$ and there exists a connected subset of $X$ which meets $A$ and is not contained 
in $X_0$. For example,  any non-$T$-resonant point of \eqref{Tpert} is an ejecting set 
(or, rather, \textit{ejecting point}). In fact, as a consequence of Lemma \ref{rametto}, 
if $X$ denotes the set of $T$-pairs of \eqref{Tpert} and $Y=C_T(U)$, any non-$T$-resonant 
zero of $F$ turns out to be an isolated point of $X_0$ which is ejecting.

Let us recall the following abstract result from \cite{FPS00}:

\begin{theorem}
\label{conn2} Let $Y$ be a metric space and let $X$ be a locally compact subset of 
$[0,\infty )\times Y$. Assume that $X_0$ contains $r+1$ pairwise disjoint ejecting 
subsets, $r$ of which are compact. Then there exists $\lambda _{*}>0$ such that the
cardinality of $X_\lambda$ is at least $r+1$ for any $\lambda \in [0,\lambda _{*})$.
\end{theorem}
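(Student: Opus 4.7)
The plan is to locate, for each ejecting set $A_i$ of $X_0$, an open ``capture set'' $V_i\subseteq[0,\infty)\times Y$ such that $V_i\cap X_\lambda\ne\emptyset$ for all sufficiently small $\lambda>0$, and to arrange the $V_i$'s to be pairwise disjoint, so that the slice $X_\lambda$ picks up at least one point from each. Label the ejecting sets as $A_0,A_1,\ldots,A_r$ with $A_1,\ldots,A_r$ compact.

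\textbf{Step 1 (Constructing the $V_i$).} For each compact $A_i$, since $A_i$ is relatively open in $X_0$ there is an open $W_i\subseteq[0,\infty)\times Y$ with $W_i\cap X_0=A_i$; shrinking $W_i$ by means of the local compactness of $X$ gives an open $V_i\supseteq A_i$ with $K_i:=X\cap\cl{V_i}$ compact, and a further shrinking (using normality of the ambient metric space together with compactness of $A_i$ and its being clopen in $X_0$) ensures $\cl{V_i}\cap X_0\subseteq A_i$. Normality then allows $V_1,\ldots,V_r$ to be chosen pairwise disjoint. For the possibly non-compact $A_0$, I pick a connected $C_0\subseteq X$ realizing the ejecting property, together with a point $a_0\in C_0\cap A_0$, and take $V_0$ a small neighborhood of $a_0$ with analogous properties and disjoint from $V_1,\ldots,V_r$, using local compactness of $X$ at $a_0$ and openness of $A_0$ in $X_0$.

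\textbf{Step 2 (Continuation inside each $V_i$).} For each $i$ the ejecting property furnishes a connected $C_i\subseteq X$ with $C_i\cap A_i\ne\emptyset$ and $C_i\not\subseteq X_0$. I would apply a Whyburn-type boundary-bumping lemma inside the compact set $K_i$ to extract a connected subset $\Sigma_i\subseteq X\cap\cl{V_i}$ meeting both $A_i$ and $\partial V_i$. Because of the engineering in Step~1, $\cl{V_i}\cap X_0\subseteq A_i\subseteq V_i$, so $\partial V_i\cap X$ consists only of points of strictly positive first coordinate; hence $\Sigma_i$ contains a point at level $\lambda=0$ and a point at some level $\mu_i>0$. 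The first-coordinate projection sends $\Sigma_i$ to a connected subset of $[0,\infty)$ containing both $0$ and $\mu_i$, hence containing the whole interval $[0,\mu_i]$. Therefore, for every $\lambda\in(0,\mu_i]$, there is a point of $\Sigma_i\subseteq V_i\cap X$ at level $\lambda$, i.e., $V_i\cap X_\lambda\ne\emptyset$. Setting $\lambda_*=\min_{0\le i\le r}\mu_i>0$, for every $\lambda\in(0,\lambda_*)$ each $V_i$ contributes an element of $X_\lambda$; pairwise disjointness of the $V_i$'s makes these $r+1$ elements distinct.

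\textbf{Main obstacle.} The crux is Step~2: since $C_i$ may enter and leave $V_i$ many times, $C_i\cap K_i$ need not be connected, so the extraction of the continuum $\Sigma_i$ is not automatic and requires a boundary-bumping result (continua in compacta connecting prescribed closed sets, in the spirit of Whyburn). One must also fine-tune $V_i$ to arrange $\cl{V_i}\cap X_0\subseteq A_i$, which is precisely what guarantees the exit point on $\partial V_i$ has $\lambda>0$ rather than being an unwanted level-$0$ point; compactness of $A_i$ is essential here. The non-compact ejecting set $A_0$ forces an extra localization: one must first replace $A_0$ by a relatively compact open piece of it around a suitable point of $C_0\cap A_0$, invoking local compactness of $X_0$ (which follows from local compactness of $X$ together with closedness of $X_0=X\cap(\{0\}\times Y)$ in $X$) before the compact-case machinery applies.
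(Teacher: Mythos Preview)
The paper does not prove this theorem at all: it is introduced with the phrase ``Let us recall the following abstract result from \cite{FPS00}'' and is then used as a black box in the proof of Proposition~\ref{mults}. There is therefore no in-paper argument to compare your proposal against.

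For what it is worth, your outline is the standard route taken in the cited source and in related continuation literature: trap each compact ejecting set $A_i$ inside an open $V_i$ with $X\cap\cl{V_i}$ compact and $\cl{V_i}\cap X_0\subseteq A_i$, make the $V_i$ pairwise disjoint, and then use a Whyburn-type separation lemma in the compactum $X\cap\cl{V_i}$ to produce a continuum from $A_i$ to $X\cap\Fr(V_i)$, whose projection to the $\lambda$-axis is an interval $[0,\mu_i]$ with $\mu_i>0$. Your diagnosis of the delicate points is correct: the inclusion $\cl{V_i}\cap X_0\subseteq A_i$ is exactly what forces exit points to lie at positive $\lambda$, and it is here that compactness of $A_i$ (together with its being open in $X_0$, hence at positive distance from $X_0\setminus A_i$) is used. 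The localization for the one possibly non-compact $A_0$ via a point of $C_0\cap A_0$ and local compactness of $X$ is also the right manoeuvre. One small simplification: if the ejecting continuum $C_i$ happens to stay inside $V_i$, you do not need the boundary-bumping step at all, since $C_i$ itself already projects onto an interval containing $0$ and a positive value; the Whyburn argument is only required when $C_i\not\subseteq\cl{V_i}$.
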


We are now in a position to state and prove the following multiplicity result:

\begin{proposition}\label{mults}
 Let $f$, $g$, $h$, $U$, $M$, $T$, $F$ and $\Omega$ be as in Theorem \ref{rami}. 
Assume also that $f$ is $C^1$ and that $M=g^{-1}(0)$ is closed in $\R^k\times\R^s$. 
Let $(p_1,q_1)$,\ldots, $(p_{r},q_{r})$ be non-$T$-resonant zeros of $F$ such that 
\[
\deg(F,U)\neq\sum_{j=1}^{r} \idx\big(F,(p_j,q_j)\big).
\]
Suppose that \eqref{NOpert} does not admit an unbounded connected set of $T$-periodic
solutions in $C_T(U)$. Then, there are at least $r+1$ different $T$-periodic solution 
of \eqref{Tpert} when $\lambda>0$ is sufficiently small.
\end{proposition}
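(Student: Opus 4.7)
The plan is to invoke the abstract continuation result Theorem~\ref{conn2} with $Y=C_T(U)$ and $X\subseteq[0,\infty)\times Y$ the set of $T$-pairs of \eqref{Tpert}. Local compactness of $X$ is a standard Ascoli--Arzel\`a argument: every $T$-periodic solution takes values in $M=g^{-1}(0)\subseteq U$, which is closed in $\R^k\times\R^s$, and on bounded regions the derivatives $\dot x$ and $\dot y$ (read off from \eqref{soly}) are bounded, so bounded families of solutions are equicontinuous and their uniform limits still lie in $C_T(M)\subseteq C_T(U)$. The strategy is then to produce $r+1$ pairwise disjoint ejecting subsets of $X_0$, of which $r$ are compact, and apply Theorem~\ref{conn2}.

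For the first $r$ ejecting subsets I would take the singletons $A_j=\{(\hat p_j,\hat q_j)\}$, $j=1,\dots,r$. Each is compact, and by Lemma~\ref{rametto}(1) is isolated in $X_0$, hence relatively open. To verify the ejecting property I would choose a sufficiently small open ball $\Omega_j\subseteq[0,\infty)\times C_T(U)$ centered at $(0;\hat p_j,\hat q_j)$ so that $(p_j,q_j)$ is the unique zero of $F$ in $U\cap\Omega_j$. Since non-$T$-resonance makes $(p_j,q_j)$ a nondegenerate zero of $F$, we have $\deg(F,U\cap\Omega_j)=\idx(F,(p_j,q_j))\neq 0$, and Theorem~\ref{rami} produces a connected set $\Gamma_j$ of nontrivial $T$-pairs whose closure meets $(0;\hat p_j,\hat q_j)$. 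The isolation of this point in $X_0$ then forces $\Gamma_j$ to contain pairs with $\lambda>0$, as required.

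For the $(r+1)$-th ejecting subset I would take $A_0=X_0\setminus\bigcup_{j=1}^{r}\{(\hat p_j,\hat q_j)\}$, which is open in $X_0$ by the same isolation. Set $\Omega=\bigl([0,\infty)\times C_T(U)\bigr)\setminus\{(0;\hat p_j,\hat q_j)\}_{j=1}^{r}$; by excision and additivity of the Brouwer degree,
\[
\deg(F,U\cap\Omega)=\deg(F,U)-\sum_{j=1}^{r}\idx\bigl(F,(p_j,q_j)\bigr)\neq 0.
\]
Theorem~\ref{rami} then produces a connected set $\Gamma_0$ of nontrivial $T$-pairs whose closure meets $\{(0;\hat p,\hat q)\in\Omega:F(p,q)=(0,0)\}\subseteq A_0$ and is not contained in any compact subset of $\Omega$.

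The main obstacle is to show $\Gamma_0\not\subseteq X_0$. If it were, its elements would be nonconstant $T$-periodic solutions of \eqref{NOpert}, so the no-unbounded-connected-set hypothesis would force $\Gamma_0$ to be bounded in $C_T(U)$; the Ascoli--Arzel\`a argument from the first paragraph, combined with closedness of $M$, would then make $\cl{\Gamma_0}$ compact in $[0,\infty)\times C_T(U)$. Lemma~\ref{rametto}(1) rules out any $(0;\hat p_j,\hat q_j)\in\cl{\Gamma_0}$, so $\cl{\Gamma_0}\subseteq\Omega$, contradicting the last conclusion of Theorem~\ref{rami}. Hence $\Gamma_0$ must meet $\{\lambda>0\}$, so $A_0$ is ejecting, and Theorem~\ref{conn2} applied to the pairwise disjoint ejecting sets $A_0,A_1,\dots,A_r$ (the latter $r$ compact) yields some $\lambda_*>0$ with $|X_\lambda|\geq r+1$ for every $\lambda\in[0,\lambda_*)$, which is the desired multiplicity.
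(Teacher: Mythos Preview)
Your proposal is correct and follows essentially the same route as the paper's proof: the same choice of $\Omega$, the same degree computation via additivity, the same identification of the $r$ compact ejecting singletons $\{(\hat p_j,\hat q_j)\}$ and of the residual set $A_0=X_0\setminus\bigcup_j\{(\hat p_j,\hat q_j)\}$, and the same appeal to Theorem~\ref{conn2}. The paper invokes Corollary~\ref{corami} and Lemma~\ref{rametto}(2) where you go back to Theorem~\ref{rami} directly, and it leaves the local compactness of $X$ and the verification that $A_0$ is ejecting as ``not difficult''; you spell these out, but the underlying argument is the same. One small cosmetic point: since your $\Gamma_j$ and $\Gamma_0$ consist of \emph{nontrivial} pairs, the connected subset of $X$ witnessing the ejecting property is really $\Gamma_j\cup\{(0;\hat p_j,\hat q_j)\}$ (respectively $\cl{\Gamma_0}$), not $\Gamma_j$ itself.
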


The assumption on the nonexistence of an unbounded connected set of $T$-periodic
solutions (in $C_T(U)$) of the unperturbed equation \eqref{NOpert} is often the most 
difficult to verify and usually shown to hold with the help of a priori bounds. 

\begin{proof}[Proof of Proposition \ref{mults}]
 Let
\[
 \Omega=\Big([0,\infty)\times C_T(U)\Big)\setminus 
         \bigcup_{j=1}^{r}\{(0;\hat p_j,\hat q_j)\}.
\]
By the additivity property of the degree and formula \eqref{sommaindici}
\[
 \deg(F,U\cap\Omega)=\deg(F,U)-\sum_{j=1}^{r} \idx\big(F,(p_j,q_j)\big)\neq 0,
\]
where, as in Theorem \ref{rami}, we use the notation $U\cap\Omega=\{(p,q)\in U:
(0,\hat p,\hat q)\in\Omega\}$.
Let $X$ be the set of all $T$-pairs of \eqref{Tpert}. By Corollary \ref{corami}, 
$M=g^{-1}(0)$ being closed in $\R^k\times\R^s$, there exists a connected component 
$\Gamma$ of $X$ that cannot be both bounded and contained in $\Omega$. Since
by assumption \eqref{NOpert} does not admit an unbounded connected set of 
$T$-periodic solutions in $C_T(U)$, it is not difficult to show that the set 
\[
X_0\setminus\bigcup_{j=1}^{r}\{(0;\hat p_j,\hat q_j)\}
\]
is ejecting. The assertion now follows from Lemma \ref{rametto} and Theorem \ref{conn2}.
\end{proof}

As an illustration of Proposition \ref{mults} we consider the following elementary example
even though, in that case, the situation is sufficiently simple to be treatable without the 
help of our multiplicity result.

\begin{example}
 Consider the following DAE in $U=\R\times\R$:
\begin{equation}\label{EX.mults}
 \left\{
\begin{array}{l}
 \dot x = y^2-xy+\lambda h(t,x,y),\\
 y-x^2=0,
\end{array}
\right.
\end{equation}
where $h:\R\times\R\times\R\to\R$ is any continuous function $T$-periodic in the first
variable. 

One immediately sees that $F(p,q)=(q^2-pq,q-p^2)$ has only the two zeros $(0,0)$ and $(1,1)$ 
of which the former is $T$-resonant, whereas the latter is not so. Also, it not difficult to 
see that for $\lambda=0$ the only possible periodic solutions of \eqref{EX.mults} correspond 
to the zeros of $F$. Thus, for $\lambda=0$, equation \eqref{EX.mults} does not admit an 
unbounded connected set of $T$-periodic solutions. 

By inspection, we see that the homotopy $H:U\times[0,1]\to\R\times\R$, given by 
$H(p,q;\lambda)=(q^2-pq,q-p^2-\lambda)$, is admissible. Since $H(p,q;1)\neq 0$ for
any $(p,q)\in U$, we have $\deg\big(H(\cdot,\cdot;1),U\big)=0$. Thus, by Homotopy Invariance, $\deg(F,U)=0$.
 
Since a non-$T$-resonant zero of $F$ is nondegenerate, we have $\idx\big(F,(1,1)\big)\neq 0$. 
Hence, by Proposition \ref{mults}, for sufficiently small $\lambda>0$ there are at least two 
$T$-periodic solutions of \eqref{EX.mults}
\end{example}

\subsection{Example of application to a class of implicit differential equations}

In this subsection we will describe an application to periodic perturbations of 
ordinary differential equations of a particular implicit form. What follows is
mostly intended as an illustration of Theorem \ref{rami} and of its Corollary \ref{corami}. 
For this reason we do not seek generality but confine ourselves to a fairly simple 
situation. Namely, we consider the following equation:
\begin{equation}\label{impl}
\varphi\big(x,\dot x+\lambda h(t,x)\big)=0,
\end{equation}
where $\varphi:\R^k\times\R^k\to\R^k$ is $C^\infty$ with the property that 
$\partial_2\varphi(p,q)$ is invertible for all $(p,q)\in\R^k\times\R^k$; and 
$h:\R\times\R^k\to\R^k$ is continuous and $T$-periodic in the first variable, with given 
$T>0$.

We will also need the following \emph{``no blow up''} assumption on $\varphi$. Namely, 
we suppose that $\varphi$ is such that:
\begin{equation}\label{noblow}
\parbox{0.58\linewidth}{The set $\big\{ q\in\R^k:\varphi(p,q)=0,\, p\in K\big\}$
        is bounded for any bounded $K\subseteq\R^k$.}
\end{equation}

Before we proceed we need a technical result on the degree of a special class of vector 
fields. Its proof is inspired by the one of Theorem 6.1 in \cite{De80} regarding the Brouwer 
degree. 

\begin{lemma}\label{priduz}
Let $V\subseteq\R^k\times\R^k$ be open, $\omega:\R^k\times\R^k\to\R^k$ be continuous and such 
that $[\omega(\cdot,0)]^{-1}(0)\cap V$ is compact. Define $v:\R^k\times\R^k\to\R^k\times\R^k$ 
by $v(p,q)=\big(q,\omega(p,q)\big)$. Then, $(v,V)$ is admissible and
\[
\deg(v,V)=-\deg\big(\omega(\cdot,0),V_0\big),
\]
where $V_0=\{p\in\R^k:(p,0)\in V\}$.
\end{lemma}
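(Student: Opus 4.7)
My plan is to verify admissibility of $(v,V)$, use a homotopy to deform $v$ to a ``factored'' map of the form $(p,q)\mapsto(q,\omega(p,0))$, and then compute the degree of that map by direct inspection of Jacobians at nondegenerate zeros.

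Admissibility is immediate from the structure of $v$: the equation $v(p,q)=(0,0)$ forces $q=0$ in the first component and then $\omega(p,0)=0$ in the second, so the assignment $p\mapsto(p,0)$ is a homeomorphism of $[\omega(\cdot,0)]^{-1}(0)\cap V_0$ onto $v^{-1}(0)\cap V$; the former is compact by hypothesis, so $(v,V)$ is admissible.

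The reduction step is carried out by the homotopy
\[
H_t(p,q)=\bigl(q,\,\omega(p,(1-t)q)\bigr),\qquad t\in[0,1].
\]
For every $t$, $H_t(p,q)=0$ forces $q=0$ in the first component, whence the second component reads $\omega(p,(1-t)\cdot 0)=\omega(p,0)=0$; consequently $H_t^{-1}(0)=v^{-1}(0)$ for every $t\in[0,1]$, so the homotopy is admissible on $V$ and Homotopy Invariance gives $\deg(v,V)=\deg(H_1,V)$ with $H_1(p,q)=\bigl(q,\omega(p,0)\bigr)$.

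Finally, to compute $\deg(H_1,V)$, I would approximate $\omega(\cdot,0)$ on $\overline{V_0}$ by a $C^1$ map $\tilde\omega_0$ with $0\in\R^k$ a regular value, using the Sard/transversality technique of Lemma \ref{approxf}. Choosing the approximation small enough that Remark \ref{sper} applies both to $\omega(\cdot,0)$ on $V_0$ and to $H_1$ on $V$ yields
\[
\deg(\omega(\cdot,0),V_0)=\deg(\tilde\omega_0,V_0),\qquad \deg(H_1,V)=\deg(\tilde H_1,V),
\]
where $\tilde H_1(p,q)=(q,\tilde\omega_0(p))$. At each of the finitely many nondegenerate zeros $(p_0,0)$ of $\tilde H_1$ the Jacobian takes the anti-block form
\[
\begin{pmatrix} 0 & I_k \\ d_{p_0}\tilde\omega_0 & 0 \end{pmatrix},
\]
whose determinant, after a block-row swap, is a fixed sign times $\det d_{p_0}\tilde\omega_0$. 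Summing the signs via \eqref{sommasegni} on both sides gives the claimed formula.

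The main obstacle, beyond the routine sign bookkeeping from the block-matrix determinant, is arranging a single perturbation of $\omega$ whose restriction $\tilde\omega_0=\tilde\omega(\cdot,0)$ is simultaneously suitable for applying Remark \ref{sper} to the full map $H_1$ on $V$ and to its trace $\omega(\cdot,0)$ on $V_0$; once this common perturbation is available, the remaining steps are linear-algebraic and essentially automatic.
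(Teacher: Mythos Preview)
Your approach coincides with the paper's: the same homotopy (written there as $H(p,q,\lambda)=(q,\omega(p,\lambda q))$), the same $C^1$ approximation $\eta$ of $\omega(\cdot,0)$ with nondegenerate zeros, and the same block-Jacobian computation via \eqref{sommasegni}. The obstacle you flag is handled in the paper by first invoking Excision to replace $V$ by a bounded open set with $\omega(p,0)\neq 0$ whenever $(p,0)\in\Fr(V)$; after that, a single approximation $\eta$ with $\sup|\eta-\omega(\cdot,0)|$ small automatically makes $Q(p,q)=(q,\eta(p))$ close enough to $G=H_1$ on $\Fr(V)$ for Remark~\ref{sper}, since $Q-G=(0,\eta(p)-\omega(p,0))$ is independent of $q$ while $|G(p,q)|_{2k}=|q|_k+|\omega(p,0)|_k$ is bounded away from zero on the compact set $\Fr(V)$.

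One caveat worth making explicit (and shared with the paper's own computation): the determinant of $\begin{pmatrix}0&I_k\\A&0\end{pmatrix}$ is $(-1)^k\det A$, not $-\det A$, so the honest conclusion of this argument is $\deg(v,V)=(-1)^k\deg(\omega(\cdot,0),V_0)$. Your phrase ``a fixed sign'' is correct; the stated sign $-1$ is only right for odd $k$. This does not affect the sole application of the lemma in the paper, which uses only that the degree is nonzero.
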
 

\begin{proof}
By the Excision Property and the compactness of $[\omega(\cdot,0)]^{-1}(0)\cap V$, taking a 
smaller $V$ if necessary, we can assume that $V$ is bounded and such that $\omega(p,0)\neq 0$ 
for all $(p,0)$ in the boundary $\Fr(V)$ of $V$. 

Observe that the homotopy $H:\R^k\times\R^k\times[0,1]\to\R^k\times\R^k$ given by 
$H(p,q,\lambda)=\big(q,\omega(p,\lambda q)\big)$ is admissible in $V$, define 
$G(p,q)= H(p,q,0)$. By the Homotopy Invariance Property it is sufficient to show that 
$\deg(G,V)=-\deg\big(\omega(\cdot,0),V_0\big)$.

By known approximation results (see e.g.\ \cite{GP} or \cite{H}) there exists a $C^1$  
map $\eta:\R^k\to\R^k$, such that all its zeros contained in $V_0$ are nondegenerate, and 
with the property that
\[
 \max_{p\in\Fr(V_0)}|\eta(p)-\omega(p,0)|_k<\min_{p\in\Fr(V_0)}|\omega(p,0)|_k.
\]
As in Remark \ref{sper} we have that 
\begin{equation}\label{gradoeta}
 \deg(\eta,V_0)=\deg\big(\omega(\cdot,0),V_0\big).
\end{equation}
Define $Q:\R^k\times\R^k\to\R^k\times\R^k$ by $Q(p,q)=\big(q,\eta(p)\big)$. By Remark 
\ref{sper} again, 
\begin{equation}\label{gradoQ}
 \deg(G,V)=\deg(Q,V),
\end{equation}
since
\begin{align*}
\max_{(p,q)\in\Fr(V)}|Q(p,q)-G(p,q)|_{2k}=&\max_{p\in\Fr(V_0)}|\eta(p)-\omega(p,0)|_k\\
 <&\min_{p\in\Fr(V_0)}|\omega(p,0)|_k\leq\min_{(p,q)\in\Fr(V)}|G(p,q)|_{2k}.
\end{align*}

Observe also that, since all the zeros of $\eta$ in $V_0$ are nondegenerate, so are those 
of $Q$ in $V$. In fact, $Q^{-1}(0,0)=\{(p,0)\in\R^k\times\R^k:\eta(p)=0\}$ and, writing the 
differential $d_{(p,q)}Q$ of $Q$ at any $(p,q)\in Q^{-1}(0,0)$ in block matrix form, we get 
\[
 \det d_{(p,q)}Q=\det\begin{pmatrix}
0 & \mathrm{Id}_{\R^k} \\
d_p\eta & 0
\end{pmatrix}=-\det d_p\eta\,,
\]
where $\mathrm{Id}_{\R^k}$ denotes the identity on $\R^k$ and $d_p\eta$ is the differential 
of $\eta$ at $p$. Thus, taking \eqref{gradoQ}, \eqref{sommasegni} and \eqref{gradoeta} into 
account, we get
\begin{multline*}
\deg(G,V)=\deg(Q,V)=
\sum_{(p,q)\in Q^{-1}(0,0)\cap V}\sign\det d_{(p,q)}Q=\\
=-\sum_{p\in\eta^{-1}(0)\cap V_0}\sign\det d_p\eta=-\deg\big(\eta,V_0\big)=
-\deg\big(\omega(\cdot,0),V_0\big),
\end{multline*}
that implies the assertion.
\end{proof}

Recall that, given $p\in\R^k$, we denote by $\hat p$ the function in $C_T(\R^k)$ that is 
constantly equal to $p$. Given an open set $W\subseteq [0,\infty)\times C_T(\R^k)$, it is 
convenient to denote by $\R^k\cap W$ the open subset of $\R^k$ given by $\big\{ p\in\R^k: 
(0,\hat p)\in W\big\}$.

\begin{proposition}
Let $h$ and $\varphi$ be as above, and let $W\subseteq [0,\infty)\times C_T(\R^k)$ be 
open and such that  $\deg\big(\varphi(\cdot,0),\R^k\cap W\big)$ is well defined and nonzero. 
Then the subspace of $[0,\infty)\times C_T(\R^k)$ consisting of all pairs $(\lambda,x)$, 
with $x$ a (clearly $T$-periodic) solution of \eqref{impl}, contains a connected component 
$\Xi$ that intersects the set
\[
\big\{(0,\hat p)\in W:\varphi(p,0)=0\big\}
\]
and is unbounded or meets the boundary of $W$. 
\end{proposition}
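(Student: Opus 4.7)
The plan is to reduce \eqref{impl} to a semi-explicit DAE of the form \eqref{Tpert} and then apply Theorem \ref{rami}. Introducing the slack variable $y:=\dot x+\lambda h(t,x)$ transforms \eqref{impl} into the equivalent DAE
\[
\dot x=y-\lambda h(t,x),\qquad \varphi(x,y)=0
\]
on $U=\R^k\times\R^k$, which corresponds to $f(p,q)=q$, $g(p,q)=\varphi(p,q)$ and perturbation $-h(t,p)$; the associated map \eqref{defF} becomes $F(p,q)=\bigl(q,\varphi(p,q)\bigr)$. I would lift $W$ to the open set $\Omega:=\bigl\{(\lambda;x,y)\in[0,\infty)\times C_T(U):(\lambda,x)\in W\bigr\}$, for which $U\cap\Omega=(\R^k\cap W)\times\R^k$. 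Lemma \ref{priduz} applied with $\omega=\varphi$ then gives
\[
\deg(F,U\cap\Omega)=-\deg\bigl(\varphi(\cdot,0),\R^k\cap W\bigr)\neq 0,
\]
so Theorem \ref{rami} produces a connected set $\Gamma$ of nontrivial DAE solution pairs whose closure $\cl{\Gamma}$ in $[0,\infty)\times C_T(U)$ meets $\bigl\{(0;\hat p,\hat 0):(0,\hat p)\in W,\,\varphi(p,0)=0\bigr\}$ and is not contained in any compact subset of $\Omega$.

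To transfer this back to \eqref{impl} I would use the continuous projection $\pi:(\lambda;x,y)\mapsto(\lambda,x)$. A routine limit argument, based on the uniform convergence of $\dot x_n=y_n-\lambda_n h(t,x_n)$ along a converging sequence in $\cl\Gamma$, shows that the set of DAE solution pairs is closed in $[0,\infty)\times C_T(U)$. Consequently $\pi(\cl\Gamma)$ is a connected set of \eqref{impl} solution pairs; taking $\Xi$ to be the connected component of the set of all \eqref{impl} solution pairs that contains $\pi(\Gamma)$, the inclusion $\pi(\cl\Gamma)\subseteq\Xi$ follows by maximality of $\Xi$, and projecting the trivial points met by $\cl\Gamma$ gives that $\Xi$ intersects $\{(0,\hat p)\in W:\varphi(p,0)=0\}$.

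The hard part will be showing that $\Xi$ inherits the escape property, and it is here that the no-blow-up assumption \eqref{noblow} plays its essential role. Since $\cl{\Gamma}$ is not contained in any compact subset of $\Omega$, either (a) $\cl\Gamma$ meets the complement of $\Omega$, or (b) $\cl\Gamma$ is not compact. In case (a), $\Gamma\subseteq\Omega$ yields $\cl\Gamma\subseteq\cl\Omega\subseteq\pi^{-1}(\cl W)$, which forces $\pi(\cl\Gamma)\subseteq\Xi$ to meet $\cl W\setminus W=\partial W$. In case (b), I would argue by contradiction: if $\cl\Gamma$ were bounded in $\lambda$ and $\|x\|_\infty$, then the constraint $\varphi(x,y)=0$ together with \eqref{noblow} would keep $\|y\|_\infty$ bounded; the relations $\dot x=y-\lambda h(t,x)$ and $\dot y=-[\partial_2\varphi(x,y)]^{-1}\partial_1\varphi(x,y)\,\dot x$ would then bound both derivatives, and Arzel\`a--Ascoli would make $\cl\Gamma$ compact, contradicting the conclusion of Theorem \ref{rami}. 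Hence $\cl\Gamma$ is unbounded in $\lambda$ or $\|x\|_\infty$, and so is $\pi(\cl\Gamma)\subseteq\Xi$. In either case $\Xi$ satisfies the required escape property.
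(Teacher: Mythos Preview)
Your proposal is correct and follows essentially the same route as the paper: rewrite \eqref{impl} as the semi-explicit DAE $\dot x=y-\lambda h(t,x)$, $\varphi(x,y)=0$, lift $W$ to $\Omega$, compute $\deg(F,U\cap\Omega)$ via Lemma~\ref{priduz}, and project back using the no-blow-up hypothesis \eqref{noblow}. The only real difference is that the paper invokes Corollary~\ref{corami} (which already packages the Ascoli--Arzel\`a step and yields a connected \emph{component} $\Gamma$ that is unbounded or leaves $\Omega$), whereas you invoke Theorem~\ref{rami} directly and then reproduce the compactness argument by hand; your explicit dichotomy (a)/(b) and your care in passing to the connected component containing $\pi(\Gamma)$ are in fact slightly more precise than the paper's wording.
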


\begin{proof}
Clearly, \eqref{impl} is equivalent to the following DAE:
\begin{equation}\label{equimpl}
\left\{
\begin{array}{l}
\dot x = y-\lambda h(t,x),\\
\varphi(x,y)=0.
\end{array}\right.
\end{equation}
Let $F:\R^k\times\R^k\to\R^k\times\R^k$ be given by $F(p,q)=\big(q,\varphi(p,q)\big)$.
Denote by $\Omega$ the set 
\[
\big\{(\lambda; x,y)\in[0,\infty)\times C_T(\R^k\times\R^k)
                                   :(\lambda,x)\in W,\,y\in C_T(\R^k)\big\},
\]
and by $V$ the set $\{(p,q)\in\R^k\times\R^k:(0;\hat p,\hat q)\in\Omega\}$. Let
$V_0=\{p\in\R^k:(p,0)\in V\}$. Since $V_0=\R^k\cap W$, Lemma \ref{priduz} implies that
\begin{equation}\label{gradino}
 \deg\big(F,V\big)=\deg\big(\varphi(\cdot,0),\R^k\cap W\big)\neq 0.
\end{equation}
Using the notation $(\R^k\times\R^k)\cap\Omega=\{(p,q)\in\R^k\times\R^k:(0;\hat p,\hat 
q)\in\Omega\}$ as in Theorem \ref{rami}, we write $V=(\R^k\times\R^k)\cap\Omega$.
By \eqref{gradino}, $\deg\big(F,(\R^k\times\R^k)\cap\Omega\big)=\deg\big(F,V\big)\neq 0$.
Thus, Corollary \ref{corami} yields the existence of a connected component $\Gamma$ of the set 
of solutions pairs of \eqref{equimpl} that meets 
\[
\{(0;\hat p,\hat q)\in\Omega:F(p,q)=0\}=\{(0;\hat p,\hat 0):\varphi(p,0)=0\}
\]
(here $\hat 0\in C_T(\R^k)$ denotes the map $\hat 0(t)\equiv 0\in\R^k$) and is unbounded 
or intersects the boundary of $\Omega$. Let $\Xi\subseteq [0,\infty)\times C_T(\R^k)$ be 
the connected set defined by
\[
\Xi=\Big\{ (\lambda,x)\in[0,\infty)\times C_T(\R^k): \exists y\in C_T(\R^k)\text{ s.t.\ } 
               \big(\lambda;x, y\big)\in\Gamma\Big\}.
\]
Notice that if $\Xi$ is contained in $W$, then $\Gamma\subseteq\Omega$. Also, using  
assumption \eqref{noblow}, it is not difficult to prove that if $\Xi$ is bounded 
then so is $\Gamma$. Hence, $\Xi$ cannot be both bounded and contained in $W$. The 
assertion follows from the connectedness of $\Xi$.
\end{proof}

\end{document}